\newtheorem{theorem}{Theorem}[section]
 \newtheorem{definition}{Definition}[section]
 \newtheorem{lemma}{Lemma}[section]
\newtheorem{remark}{Remark}[section]
\newcommand{\clb}{\color{black}}
\begin{document}
\nocite{*} 


\title{Error estimates for semi-discrete finite element approximations for a moving boundary problem  capturing the penetration of diffusants into rubber}

 \author{Surendra\,Nepal $^{1,*}$,  Yosief\,Wondmagegne$^1$, Adrian\,Muntean$^1$ \\
$^1$ Department of Mathematics and Computer Science, Karlstad University, Sweden\\
*surendra.nepal@kau.se}

\date{\today} 

\maketitle

\noindent
\begin{abstract}
{\clb We consider a moving boundary problem with kinetic condition that describes the diffusion of solvent into rubber and study semi-discrete finite element approximations of the corresponding weak solutions.} We report on both \textit{a priori} and \textit{a posteriori} error estimates for the mass concentration of the diffusants, and respectively, for the {\clb {\em a priori unknown}} position of the moving boundary. Our working techniques include integral and energy-based  estimates for a nonlinear parabolic problem posed in a transformed fixed domain combined with a suitable use of the interpolation-trace inequality to handle the interface terms.
Numerical illustrations of our FEM approximations are within the experimental range and show good agreement with our theoretical investigation. {\clb This work is a preliminary investigation necessary before extending the current moving boundary modeling to account explicitly for the mechanics of hyperelastic rods  to capture a directional swelling of the underlying elastomer.}

\medskip

\noindent 
\textit{Keywords:}  
Moving boundary problem, 
finite element method, 
method of lines,
\textit{a priori} error estimate, 
\textit{a posteriori} error estimate, 
diffusion of chemicals into rubber.\\

{\bf Mathematics Subject Classifications (2020). 65M15, 65M20, 65M60, 35R37}
\end{abstract}

\section{Introduction}


Sharp interfaces moving in an {\em a priori} unknown way inside materials play a key role in a number of study cases in science and technology, including in the forecast of the durability of cementitious-based materials (cf. e.g.  \cite{chainais2018convergence, muntean2008error, muntean2009moving, zurek2019numerical}), large-time behavior of chemical species from the environment slowly penetrating by diffusion and swelling rubber-based  materials (cf. e.g. \cite{aiki2020macro, kumazaki2021free, nepal2021moving}), to controlling phase transitions like melting and freezing or solid-solid changes in concrete (cf. e.g. \cite{alexiades1992mathematical,piqueras2018numerical, piqueras2020solving}), to mention but a few. Due to the inherent non-linearity of such moving boundary problems, analytical representations of solutions are often either unavailable or not computable. Hence, one has to rely on direct computational approaches to get insight for instance in the behavior of large times of such moving sharp interfaces, as this usually defines the lifetime of the material under investigation.  

In the framework of this paper, we study a semi-discrete finite element approximation of weak solutions to a one dimensional moving boundary problem that models the diffusion of solvent into rubber (see Section \ref{statement}). This is a follow-up study  of our recent work \cite{nepal2021moving}, where we proposed a finite element  approximation of solutions to a moving boundary problem which we used to recover experimental data. Now, we explore the quality of our approximation scheme. {\clb Specifically}, we report on both \textit{a priori} and \textit{a posteriori} error estimates for the mass concentration of the diffusants, and respectively, for the position of the moving boundary. Our working techniques include integral and energy-based  estimates for the corresponding nonlinear parabolic problem posed in a transformed fixed domain, combined with a suitable use of the interpolation-trace inequality to handle the interface terms. At the technical level, we were very much inspired by the references: \cite{caboussat2005analysis, heywood1982finite,nitsche1978finite, pani1991finite}, and \cite{muntean2008error}.
It is worth noting that similar work has been done in related contexts. For instance, in \cite{chainais2018convergence}, the authors show the convergence of a numerical scheme obtained by combining an Euler discretization in time
with  a Scharfetter-Gummel discretization in space for a concrete carbonation model with moving boundary reformulated for  a fixed space domain. In \cite{zurek2019numerical}, A. Zurek studies the long time regime of the moving interface driving the concrete carbonation reaction model by tailoring an implicit in time and
finite volume in space scheme. He proves that the approximate free boundary increases in time with $\sqrt{t}$-law as theoretically predicted in \cite{aiki2011free}.  In \cite{mackenzie2000numerical}, one develops an adaptive moving mesh method for the numerical solution of
an enthalpy formulation of a class of heat-conduction problems with  phase change. The main aim of \cite{javierre2006comparison} is to provide a  comparison of several numerical methods including displacing level sets, moving
grids, and diffusing phase fields to address two well-known Stefan problems arising as best formulations for phase transformations like  melting of a pure phase and diffusional solid-state phase changes in binary systems.

{\clb To handle our problem, we decided to use the finite element method as this fits best to the regularity of the (weak) solutions to our moving boundary problem. Mind though that other discretization methods are likely to be applicable as well. As our work is purely in 1D and no expensive computations are expected, and as, on top of this, we wish to rely on open source facilities, we chose {Python} for the implementation work.}

{\clb We present here a preliminary investigation of this class of problems. This is necessary before extending the current moving boundary modeling to account explicitly for the mechanics of hyperelastic rods  to capture a directional swelling of the underlying elastomer. In this spirit, a natural next step  would be to perform the numerical analysis of a two-scale finite element approximation of the setup described in \cite{aiki2020macro}.}

The outline of this study is as follows: We formulate our moving boundary problem in Section \ref{statement}. The discussion of the setting of the model equations  is based on \cite{nepal2021moving}. We collect in Section \ref{preliminaries} our basic assumptions on parameters {\clb and model components}, as well as notations and existing preliminary results. Section \ref{transformation} contains the fixed domain transformation of our problem and the definition of our concept of weak solutions which is then the subject of error approximation estimates investigated here. {\clb Benefiting of the mathematical analysis done for our problem in \cite{NHM,kumazaki2020global}, we are able to} prove  the global existence  of weak solutions to the semi-discrete problem and obtain the needed uniform boundedness results to produce convergent numerical schemes.  As main result, we obtain \textit{a priori} and \textit{a posteriori} error estimates as listed in Section \ref{mainresults}. A couple of numerical experiments are  discussed in Section \ref{numerical}.  Essentially, they support  numerically the available experimental results. 
Finally, a brief conclusion of this work is outlined in Section \ref{conclusion}.

\section{Model equations}\label{statement}
We consider a thin slab of a dense rubber, denoted by $\Omega$ of vertical length $L>0$, placed in contact with a diffusant reservoir. When the diffusant concentration at the bottom face of the rubber exceeds some threshold, the diffusant moves into the rubber creating a sharp interface that separates the rubber $\Omega$ into two parts, the diffusant free region and diffusant-penetrated region. Our {\clb region of interest} is the diffusant-penetrated part where  the diffusant's flux is assumed to satisfy Fick's law. The actual  problem is to find the  diffusant concentration profile inside the diffusant-penetrated region and the location of the moving interface {\clb separating the penetrated from the not-yet penetrated region}. Such a  setting is referred to as a   one-phase moving boundary problem. {\clb Formulations as a two-phase boundary problem are  possible as well, but are currently not in our focus;} see e.g. \cite{gupta2017classical} for a {\clb nicely written } textbook regarding modeling with moving interfaces. 

In this work, the modeling domain is 
the one–dimensional slab
shown in Figure \ref{Fig:2}, which is the longitudinal line where  $0<s(0)\leq s(t)\leq L$.
\begin{figure}[ht]
	\begin{center}
		\begin{tikzpicture}[scale=1, every node/.style={scale=1}]
		\draw[|-|, color=red] (-1,-4) -- (5,-4);
		\draw[|-|, color=red] (3,-4) -- (5,-4);
		\draw (0, -4.1) -- (0, -3.9);
		\node at (-1,-4.4) {$ 0$};
		\node at (0,-4.4) {$s(0)$};
		\node at (3,-4.4) {$s(T_f)$};
		\node at (5,-4.4) {$L$};
		\draw[thick, <-]  (-0.5, -4) -- (-0.5, -5)node[anchor=north] {Zone inside rubber with penetration of diffusants at $t=0$} ;
		\draw[thick, <-]  (1.5, -4) -- (1.5, -3)node[anchor=south] {Diffusant-free rubber at $t=0$} ;
		\end{tikzpicture}
		\caption{Sketch of one dimensional geometry -- a macroscopic thin slab made of rubber.}
		\label{Fig:2}
	\end{center}
\end{figure}
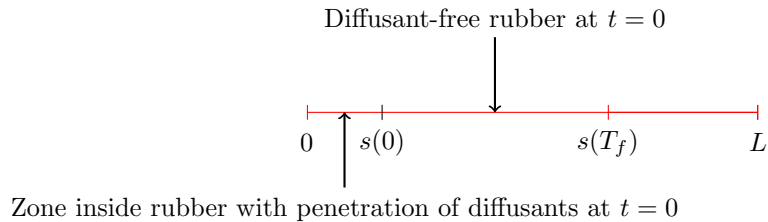
For a fixed observation time $T_f\in (0, \infty)$, the interval $[0, T_f]$ is the time span of the process we are considering.  Let  $x\in [0, s(t)]$ and $t \in [0, T_f]$ denote the space and respectively time variable,  and let $m(t, x)$  be the concentration of diffusant placed in position $x$ at time  $t$.
 The diffusants concentration  $m(t, x)$  acts in the region $Q_s(T_{f})$ defined by $$ Q_s(T_{f}):= \{ (t, x) | t \in (0, T_{f}) \; \text{and}\; x \in (0, s(t))\}.$$ 
 The problem reads: Find 
 $m(t, x)$ and the position of the moving interface $x = s(t)$ for $t\in(0, T_f)$ such that the couple $(m(t, x), s(t))$ satisfies the following 
\begin{align}
\label{a11}&\displaystyle \frac{\partial m}{\partial t} -D \frac{\partial^2 m}{\partial x^2} = 0\;\;\; \ \text{in}\;\;\; Q_s(T_{f}),\\
\label{a12}
&-D \frac{\partial m}{\partial x}(t, 0) = \beta(b(t) -\text{H}m(t, 0))  \;\;\; \text{for}\;\;  t\in(0, T_{f}),\\
\label{a13}&-D \frac{\partial m}{\partial x}(t, s(t))  =s^{\prime}(t)m(t, s(t))  \;\;\; \text{for}\;\; t\in(0, T_{f}),\\
\label{a14}&s^{\prime}(t) = a_0 (m(t, s(t)) - \sigma(s(t)) \;\;\;\;\text{for } \;\;\; t \in (0,T_{f}),\\
\label{a15}&m(0, x) = m_0(x) \;\;\;\text{for}\;\;\; x \in [0, s(0)],\\
\label{a16} & s(0) = s_0>0\; \text{with}\;\; 0<  s_0< s(t) < L, 
\end{align}
where  $a_0>0$ is a  kinetic coefficient, $\beta$ is a positive constant,  $D>0$ is a diffusion constant, $\text{H}>0$ is the Henry's constant,  $\sigma$ is a function on $\mathbb{R}$, $b$ is a given boundary function on $[0, T]$, and
$s_0>0$ is the initial position of the free boundary and $m_0$ is the initial concentration of the diffusant.

The boundary condition \eqref{a13} describes the mass conservation of diffusant concentration at the moving boundary. It indicates that the diffusion mechanism is responsible for pushing the interface.  In particular \eqref{a14} points out that the mechanical behaviour (here it is about the swelling of the rubber) also contributes to the motion of the moving penetration front. The explanation of the model equations and the physical meaning of the parameters are given in \cite{nepal2021moving}.

\section{Notations, assumptions and preliminaries} \label{preliminaries}
In this section, we list our basic assumptions on the data, notations as well as approximation properties of functions that are required  for the error analysis discussed in the next sections.
\subsection{Function spaces and elementary inequalities}
Let $u, v : \Omega \to \mathbb{R}$ denote two generic functions. Let $W^{r, p}(\Omega)$ be the  Sobolev space on domain $\Omega$ for $1\leq p \leq \infty$ and $r\geq 0$.  For $r = 0$, we simply write $L^p(\Omega)$ in place of  $W^{0, p}(\Omega)$ with the norm $\|\cdot\|_{L^p(\Omega)}$  defined as follows:
\begin{equation*}
\|u\|_{L^p(\Omega)} := \begin{cases}  \left( \displaystyle \int_{\Omega} |u(x)|^p dx  \right)^{\frac{1}{p}} \;\;&\text{for} \;\; 1\leq p < \infty,\\
 \text{ess sup} \{ |u(x)|: x\in \Omega\} \;\;  &\text{for} \;\;  p = \infty,
\end{cases}
\end{equation*}
For $p=2$ and $r\geq1$, we write $H^r(\Omega)$ in place of  $W^{r, 2}(\Omega)$ with the norm $\|\cdot\|_{H^{r}(\Omega)}$ defined by
\begin{align}\label{hnorm}
\|u\|_{H^r(\Omega)} =  \left(\sum_{|\alpha|\leq r} \int_{\Omega} |\partial^{\alpha} u|^2 dx \right)^{\frac{1}{2}}.
\end{align}
In \eqref{hnorm} $\partial^{\alpha} u$ denotes the $\alpha$'th derivative of $u$ in the weak sense.
Furthermore, for $L^2(\Omega)$ and $H^r(\Omega)$ we have the following inner products.
\begin{align*}
    &(u, v)_{L^2(\Omega)} :=  \int_{\Omega} u(x) v(x) dx,\\
    &(u, v)_{H^r(\Omega)} := \sum_{|\alpha|\leq r}(\partial^r u, \partial^r v)_{L^2(\Omega)}.
\end{align*}
Let $X$ be a Banach space with norm $\| \cdot \|_{X}$ and  $v:[0,T] \rightarrow X$ be a function. Correspondingly, $L^p(0, T, X)$ is a Bochner space endowed with the norms
\begin{align*}
 \| v\|_{L^p(0, T, X)}:= \begin{cases} \left( \displaystyle \int_0^T \| v(\tau)\|_{X}^p d\tau\right)^{\frac{1}{p}} \;\;\;\;\;&\text{for}\;\;\; 1\leq p < \infty,\\
 \displaystyle \sup_{0 \leq \tau \leq T } \|v(\tau)\|_{X}\;\;\; &\text{for}\;\; p = \infty.
\end{cases}
\end{align*}
More information on Sobolev and Bochner spaces with their various norms and inner products can be found for instance in  \cite{adams2003sobolev, kufner1977function}. 
For the convenience of writing, we denote  $u(t, 0)$ and $u(t, 1)$ by $u(0)$ and $u(1)$, respectively. We also use the prime $(^\prime)$ to point out the derivative with respect to time variable, and  $\|\cdot\|$ and $(\cdot,\cdot)$ for the norm and, respectively, inner product in $L^2(\Omega)$. { \clb Furthermore, $\|\cdot\|_\infty$ refers to the norm of $L^\infty(\Omega)$.}\\
We list a few elementary inequalities that we  frequently use in this work.
\begin{enumerate}[(i)]
\item Young's inequality:
\begin{align}
\label{a3}ab \leq \xi a^p + c_{\xi} b^q,
\end{align}
where $a, b \in \mathbb{R}_{+},\; \xi >0,\; c_{\xi}:= \displaystyle \frac{1}{q} \frac{1}{\sqrt[p]{(\xi p)^q}} >0, \; \displaystyle \frac{1}{p} + \frac{1}{q} = 1$ and $p \in (1, \infty).$
\item Interpolation inequality: { \clb For all $u \in H^1(0, 1)$,
 there exists a constant $\hat{c}>0$ depending on $\theta \in [\frac{1}{2}, 1)$ such that 
\begin{align}
\label{a33}
\| u\|_{\infty} \leq \hat{c} \|u\|^{\theta} \left\Vert u\right\Vert_{H^1(0,1)}^{1 - \theta}.
\end{align}
For  $\theta = 1/2$, one gets 
\begin{align*}
\| u\|_{\infty}^2\leq \hat{c}\left(\xi \left\Vert \frac{\partial u}{\partial y}\right\Vert^2 + ( \xi + c_{\xi}) \|u\|^2 \right),
\end{align*} 
where $\xi$ and $c_{\xi}$ are as in \eqref{a3}. See details in \cite{zeidlernonlinear} p. 285 (example 21.62).}
\end{enumerate}

\subsection{Assumptions on parameters}
Throughout this paper, we assume the following restrictions on the parameters. 
\begin{enumerate}[({A}1)]
	\item \label{A1} 	$a_0,\;\text{H},\; D,\; s_0, \; T_{f}$ are positive constants.
	\item  	\label{A2}$b \in W^{1, 2}(0, T_{f})$ with $0< b_{*} \leq b \leq b^{*}$ on $(0, T_{f})$, where $b_{*}$ and $b^{*}$ are positive constants.
	\item 	\label{A3} $\beta \in C^1(\mathbb{R}) \cap W^{1, \infty} (\mathbb{R})$ such that $\beta = 0$ on $(\infty, 0]$, and there exists $r_{\beta}>0$ such that $\beta^{\prime}>0$ on $(0, r_{\beta})$ and $\beta = k_0$ on $[r_{\beta}, +\infty)$, where $k_0>0$.
	\item 	\label{A4} $\sigma \in C^1 (\mathbb{R}) \cap W^{1, \infty} (\mathbb{R})$ such that $\sigma = 0$ on ${\clb (-\infty, 0 )}$, and {\clb there exists}  $r_{\sigma}$ such that $\sigma^{\prime}>0$ on $(0, r_{\sigma})$ and $\sigma = c_0$ on $[r_{\sigma}, +\infty),$ where $c_0$ {\clb satisfies} 
\begin{align}
 0 <c_0 < \min\{ 2 \sigma(0), b^*\text{H}^{-1}\}.
\end{align}
\item 	\label{A5}$0<s_0< r_{\sigma}$ and $m_0 \in H^1(0, s_0)$ such that $\sigma(0) \leq u_0 \leq b^*\text{H}^{-1}$ on $[0, s_0].$
\end{enumerate}
The assumptions (A\ref{A1})--(A\ref{A5}) are adopted from \cite{kumazaki2020global}, where the authors have proved the global solvability of the problem and continuous dependence estimates of the solution  with respect to the initial data. 

\subsection{Basic facts from approximation theory}
Let $N \in \mathbb{N}$ be 
given.
We set $0 =y_0 < y_1 < \cdots < y_{N-1} = 1$ as discretization points in the interval $[0, 1]$. We set $k_i := y_{i+1} - y_{i}$ {\clb for $i \in \{0,1,\cdots, N-2\}$} and {\clb $k := \displaystyle \max \{k_i: \ i \in \{0,1,\cdots, N-2\}\}$}. We introduce the space 
\begin{align} \label{finite}
V_k := \{ \nu \in C[0, 1]: \nu |_{[y_j, y_{j+1}]}  \in \mathbb{P}_1\}, 
\end{align} 
where $\mathbb{P}_1$ represents the set of 
polynomials of degree one.
{\clb Let $\{\phi_i\}_{i=0}^{N-1}$ be the set of basis functions for the space $V_k$ defined by
\begin{equation*}
\phi_i(y) =
\begin{cases}
0 &\;\;\;\;\;\; \text{if}\;\;\;\;y< y_{i-1}\\
\displaystyle \frac{ y - y_{i-1}}{k_{i-1}}  & \;\;\;\;\;\;\text{if}\;\;\;y_{i-1} \leq  y < y_i\\
\displaystyle \frac{ y_{i+1} - y}{k_{i}}  &\;\;\;\;\;\; \text{if}\;\;\;y_{i}\leq y < y_{i+1}\\
0 &\;\;\;\;\;\; \text{if}\;\;\;\; y_{i+1}\leq y.\\
\end{cases}
\end{equation*}
}
We define the interpolation operator $I_k:C[0, 1] \rightarrow V_k$ by
\begin{align*}
(I_k u)(y):= \sum_{i=0}^{N-1} u(y_i, t)\phi_i(y).
\end{align*}
{\clb Here the function $I_k u$ is called the Lagrange interpolant of $u$ of degree 1; for more} details see e.g. \cite{larsson2008partial}. 
\begin{lemma} \label{lemma1}
	Take $\theta \in [\frac{1}{2}, 1)$ and $\psi\in H^2(0,1)$.  Then there exist  strictly positive constants $\gamma_1,\; \gamma_2$ and $\gamma_3$ such that the Lagrange interpolant $I_k \psi$ of $\psi$ satisfies the following estimates:
	\begin{enumerate}[{\rm(i)}]
	\item  	\label{l1} $\|\psi - I_k\psi \|\leq \gamma_1 k^2 \|\psi \|_{H^2(0,1)}$ 
		\item  \label{l2} $\left\Vert\displaystyle\frac{\partial}{\partial y}  (\psi- I_k\psi) \right\Vert\leq \gamma_2 k \|\psi \|_{H^2(0,1)}$ 
	\item 	\label{l3}	$|\psi(0) - I_k\psi(0)|\leq {\clb \hat{c} \left(  \gamma_1 k^2 + \gamma_3 k^{1+\theta} \right) \|\psi \|_{H^2(0,1)}}$ 
				\item \label{l4}	$|\psi(1) - I_k \psi(1)|\leq \hat{c} \left(  \gamma_1 k^2 + \gamma_3 k^{1+\theta} \right) \|\psi \|_{H^2(0,1)}$ 
		\end{enumerate}
	\end{lemma}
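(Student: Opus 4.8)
The plan is to establish the four estimates by combining standard polynomial interpolation theory with the interpolation-trace inequality \eqref{a33} already recorded in the excerpt. Items \eqref{l1} and \eqref{l2} are the classical $L^2$ and $H^1$-seminorm interpolation error bounds for piecewise-linear Lagrange interpolation, so I would treat these first and use them as building blocks for the boundary estimates \eqref{l3} and \eqref{l4}.

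For \eqref{l1} and \eqref{l2}, I would work element-by-element. On a generic subinterval $[y_j, y_{j+1}]$ of length $k_j \leq k$, the interpolant $I_k\psi$ agrees with $\psi$ at the two endpoints, so the error $e := \psi - I_k\psi$ vanishes at $y_j$ and $y_{j+1}$. First I would invoke the standard local estimates $\|e\|_{L^2(y_j,y_{j+1})} \leq C k_j^2 \|\psi''\|_{L^2(y_j,y_{j+1})}$ and $\|e'\|_{L^2(y_j,y_{j+1})} \leq C k_j \|\psi''\|_{L^2(y_j,y_{j+1})}$; these follow either from a Bramble--Hilbert/scaling argument or directly from writing $e$ via its Peano kernel representation and applying the Poincaré inequality on the reference element (using that $e$ vanishes at the endpoints, and $e'$ has zero average). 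Then I would square, sum over all elements $j$, use $k_j \leq k$ to pull out the uniform factor $k^2$ (respectively $k$), and recognize $\sum_j \|\psi''\|_{L^2(y_j,y_{j+1})}^2 = \|\psi''\|^2 \leq \|\psi\|_{H^2(0,1)}^2$. Taking square roots yields \eqref{l1} with constant $\gamma_1$ and \eqref{l2} with constant $\gamma_2$.

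For the boundary (trace) estimates \eqref{l3} and \eqref{l4}, the key idea is that $e(0)$ and $e(1)$ are point values, hence controlled by the $L^\infty$ norm of $e$, and here is where \eqref{a33} enters. Applying the interpolation inequality to $e \in H^1(0,1)$ gives
\begin{align*}
|e(0)| \leq \|e\|_\infty \leq \hat{c}\,\|e\|^{\theta}\,\|e\|_{H^1(0,1)}^{1-\theta}.
\end{align*}
Now I would insert the already-proved bounds: $\|e\| \leq \gamma_1 k^2 \|\psi\|_{H^2(0,1)}$ from \eqref{l1}, and for the $H^1$-norm I would combine $\|e\| \leq \gamma_1 k^2\|\psi\|_{H^2(0,1)}$ with the seminorm bound $\|e'\| \leq \gamma_2 k\|\psi\|_{H^2(0,1)}$ from \eqref{l2} to obtain $\|e\|_{H^1(0,1)} \leq C k\|\psi\|_{H^2(0,1)}$ (for $k$ bounded, the $k^2$ term is dominated by the $k$ term). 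Substituting these into the interpolation inequality produces a factor $(k^2)^{\theta}(k)^{1-\theta} = k^{2\theta + 1 - \theta} = k^{1+\theta}$, so that $|e(0)| \leq \hat{c}\,\gamma_3\, k^{1+\theta}\|\psi\|_{H^2(0,1)}$. Since the stated right-hand side in \eqref{l3} is $\hat{c}(\gamma_1 k^2 + \gamma_3 k^{1+\theta})\|\psi\|_{H^2(0,1)}$, which contains this bound, the estimate follows; the $\gamma_1 k^2$ term is there to give a clean combined constant and to dominate lower-order contributions. The argument for $e(1)$ in \eqref{l4} is identical by symmetry.

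The main obstacle, and the step deserving the most care, is the sharp tracking of the $k$-powers in the trace estimates: one must verify that the product of the $L^2$ rate ($k^2$) raised to $\theta$ and the $H^1$ rate ($k^1$) raised to $1-\theta$ genuinely yields the advertised exponent $1+\theta$, and that the $H^1$-norm of the error is dominated by its seminorm part at rate $k$ rather than the full-norm rate. This is precisely where the restriction $\theta \in [\tfrac{1}{2},1)$ matters, since it guarantees $1+\theta \in [\tfrac{3}{2}, 2)$, so the trace error rate is genuinely worse than the interior $L^2$ rate but still better than first order — consistent with the loss of accuracy one expects when evaluating a finite element function at a single point near the boundary. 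Everything else reduces to assembling the well-known local interpolation estimates and summing, which I would present compactly by citing the reference element/scaling technique rather than computing each constant explicitly.
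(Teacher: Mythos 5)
Your proposal is correct and takes essentially the same route as the paper: (i)--(ii) are the standard local interpolation estimates (which the paper simply cites), and (iii)--(iv) follow, exactly as in the paper, by applying the interpolation inequality \eqref{a33} to $e=\psi-I_k\psi$ and inserting (i)--(ii) to get the rate $k^{2\theta}\cdot k^{1-\theta}=k^{1+\theta}$. The only cosmetic difference is that you absorb the $k^2$ contribution of $\|e\|_{H^1(0,1)}$ into the $k$ term (valid since $k\leq 1$ on $[0,1]$), obtaining a pure $k^{1+\theta}$ bound dominated by the stated right-hand side, whereas the paper keeps both contributions via $\|e\|_{H^1(0,1)}^{1-\theta}\leq \|e\|^{1-\theta}+\left\Vert \frac{\partial e}{\partial y}\right\Vert^{1-\theta}$, which is how the two-term bound $\hat{c}\left(\gamma_1 k^2+\gamma_3 k^{1+\theta}\right)$ with $\gamma_3=\gamma_1^{\theta}\gamma_2^{1-\theta}$ arises.
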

\begin{proof}
The inequalities (\ref{l1}) and (\ref{l2}) are standard results. For details on their proof, see for instance page 61 in \cite{larsson2008partial} and page 3 in \cite{thomee2007galerkin}. To show (\ref{l3}), we use the interpolation inequality \eqref{a33} together with (\ref{l1}) and (\ref{l2}) to obtain
{\clb	\begin{align*}
	|\psi(0) - I_k\psi(0)|&\leq \hat{c} \|\psi - I_k\psi \|^{\theta}_{L^2(0,1)} \left\Vert  \psi- I_k\psi \right\Vert^{1 - \theta}_{H^1(0,1)}\\
	&\leq \hat{c} \|\psi - I_k\psi \|^{\theta}_{L^2(0,1)} \left(\|\psi - I_k\psi \|^{1- \theta} + \left\Vert\displaystyle\frac{\partial}{\partial y}  (\psi- I_k\psi)\right\Vert^{1-\theta} \right)\\
	& \leq \hat{c} \left(  \gamma_1 k^2 + \gamma_1^{\theta} \gamma_2^{1-\theta} k^{1+\theta} \right) \|\psi \|_{H^2(0,1)}.
	\end{align*}}
	Taking $\gamma_3 :=  \gamma_1^{\theta} \gamma_2^{1-\theta} $ gives the estimate  $({\rm\ref{l3}})$. A similar argument applied to $\psi(1)$ gives $({\rm\ref{l4}})$. 
	\end{proof}
	
\section{Fixed-domain transformation and definition of weak solutions}
\label{transformation}
Firstly, we perform the non-dimensionalization of the model equations \eqref{a11}--\eqref{a16}. We then transform the non-dimensional model equations from the a priori unknown non-cylinderical domain into the cylinderical domain $Q(T) := \{ (\tau, y) |\; \tau \in (0, T) \; \text{and}\; y \in (0, 1)\}$ by using the Landau transformation $y = x/s(t)$, see for instance \cite{landau1950heat}. For more details on non-dimensionalization and transformation, we refer the reader to \cite{nepal2021moving} where the preliminary steps are done. In dimensionless form, the transformed  problem {\clb reads as follows:}
\begin{align}
\label{aa17} 
&\displaystyle\frac{\partial{u}}{\partial{\tau}} - y\frac{h^{\prime}(\tau)}{h(\tau)}  \frac{\partial{u}}{\partial{y}}- \frac{1}{(h(\tau))^2}\frac{\partial^2{u}}{\partial{y^2}} = 0
\;\;\; \ \text{in}\;\;\;Q(T),\\
\label{aa18} &- \frac{1}{h(\tau)} \frac{\partial u}{\partial y}(\tau, 0) = \text{\rm{Bi}}\left(\frac{b(\tau)}{m_{0}}- \text{H}u(\tau, 0)\right)\;\;\; \text{for}\;\; \tau\in(0, T), \\
\label{aa19}&- \frac{1}{h(\tau)} \frac{\partial u}{\partial y}(\tau, 1) =  h^{\prime}(\tau)u(\tau, 1) \;\;\; \text{for}\;\; \tau\in(0, T), \\
&\label{aa20} h^{\prime}(\tau) =A_0\left(u(\tau, 1)-\frac{\sigma(h(\tau))}{m_{0}}\right)\;\;\; \text{for}\;\; \tau\in(0, T)\\
\label{aa21} &u(0, y)  = u_0(y) \;\;\; \text{for}\;\; y\in[0, 1], \\
\label{aa22}&h(0) = h_0.
\end{align} 
We refer to the system \eqref{aa17}--\eqref{aa22} posed in the cylinderical domain  $Q(T)$  as problem $(P)$. 
\begin{remark}
We refer the reader to \cite{nepal2021moving} for the definition of dimensionless quantities $u,\;h,\; \tau,\;y,\;T,\; {\rm Bi},\; A_0$. Here we only mention that {\rm Bi} is the mass transfer Biot number and $A_0$ is the Thiele modulus.
	\end{remark}
\begin{definition}
	\label{D1}
	{\rm (Weak Solution to ($P$))}. We call the couple $(u, h)$ a weak solution to problem {\rm ($P$)} on $S_T:=(0, T)$ if and only if  
	\begin{align*}  &h\in W^{1, \infty}(S_{T}) \;\;\text{with}\;\;h_0 < h(T) \leq L,\\
	&u \in W^{1,2}(Q(T)) \cap L^{\infty}(S_{T}, H^1(0, 1)) \cap L^2(S_{T}, H^2(0, 1)),
	\end{align*} 
	such that for all $\tau \in S_{T}$ the following relations hold 
\begin{align}
\nonumber\displaystyle \left(\frac{\partial u}{\partial \tau}, \varphi \right)&-  \frac{h^{\prime}(\tau)}{h(\tau)}\left( y\frac{\partial u}{\partial y},  \varphi \right) +  \frac{1}{(h(\tau))^2}\left(\frac{\partial{u}}{\partial{y}}, \frac{\partial \varphi }{\partial y}  \right)\\
\label{a17}& - \frac{1}{h(\tau)} {\rm Bi}\left(\frac{b(\tau)}{m_{0}}- {\rm H}u(\tau, 0)\right)\varphi(0) +\frac{h^{\prime}(\tau)}{h(\tau)}u(\tau, 1)\varphi (1) = 0 \;\;\text{for all} \; \varphi \in H^1(0,1),\\
& \label{a18} h^{\prime}(\tau) =A_0\left(u(\tau, 1)-\frac{\sigma(h(\tau))}{m_{0}}\right),\\
\label{u}&u(0, y) = u_0(y)\;\;  for \;\; y \in[0, 1],  \\
\label{h}&h(0) = h_0.
\end{align}
\end{definition}

\begin{theorem}
If {\rm (A\ref{A1})--(A\ref{A5})} hold, then problem $(P)$ has a unique solution $(u, h)$ on $S_T$ in the sense of Definition \ref{D1}.
\end{theorem}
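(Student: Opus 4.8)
The plan is to establish existence and uniqueness through the one-to-one correspondence between solutions of $(P)$ and solutions of the original moving-boundary formulation \eqref{a11}--\eqref{a16}, whose well-posedness under (A\ref{A1})--(A\ref{A5}) is already available in \cite{kumazaki2020global, NHM}. First I would observe that the Landau transformation $y = x/s(t)$ is, for every admissible $s$ bounded away from zero (here $s_0 \le s \le L$), a smooth bijection of the non-cylindrical region $Q_s(T_f)$ onto the fixed cylinder $Q(T)$, with inverse $x = y\,s(t)$. Consequently a couple $(m,s)$ solves \eqref{a11}--\eqref{a16} in the weak sense if and only if the transformed couple $(u,h)$ — with $u$ and $h$ the dimensionless counterparts of $m$ and $s$ introduced in \cite{nepal2021moving} — solves \eqref{a17}--\eqref{h}. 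Since the Jacobian $s(t)$ of the change of variables and its reciprocal are bounded and bounded away from zero, the transformation preserves the function spaces listed in Definition \ref{D1}, so the existence of a unique weak solution for $(P)$ follows directly from the corresponding statement for the original problem.

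Alternatively, should a self-contained argument be preferred, I would build $(u,h)$ by a fixed-point scheme decoupling the parabolic equation from the interface ODE. I would introduce the admissible set
\[
\mathcal{K} := \{ h \in W^{1,\infty}(S_T): h(0)=h_0, \ h_0 \le h \le L, \ 0 \le h' \le M \},
\]
for a constant $M$ to be fixed by the a priori estimates, and define a map $\mathcal{T}:\mathcal{K}\to\mathcal{K}$ as follows. Given $h\in\mathcal{K}$, the coefficients $h'/h$ and $1/h^2$ in \eqref{aa17} are bounded, so the resulting linear parabolic problem with the Robin-type boundary condition \eqref{aa18} and the transport boundary term \eqref{aa19} admits a unique solution $u$, which I would construct by a standard Galerkin approximation followed by passage to the limit. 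Testing the weak form \eqref{a17} with $\varphi = u$ and with $\varphi = \partial_\tau u$, and using the interpolation-trace inequality \eqref{a33} to absorb the boundary contributions $u(\tau,0)$ and $u(\tau,1)$ into the $H^1$ energy, yields the uniform bounds placing $u$ in $L^\infty(S_T,H^1(0,1))\cap W^{1,2}(Q(T))$; the membership in $L^2(S_T,H^2(0,1))$ then follows by reading \eqref{aa17} as an elliptic equation for $u$ in $y$ with right-hand side in $L^2$. I would then set $\tilde h(\tau) := h_0 + A_0\int_0^\tau \big(u(r,1) - \sigma(h(r))/m_0\big)\,dr$, which defines $\mathcal{T}h := \tilde h$.

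The bookkeeping needed to keep $\mathcal{T}$ self-mapping uses the structure of $\sigma$ from (A\ref{A4})--(A\ref{A5}): the comparison bound $\sigma(0)\le u\le b^*\mathrm{H}^{-1}$ propagated from the data (A\ref{A5}) guarantees $u(\tau,1)-\sigma(h)/m_0 \ge 0$, hence $\tilde h'\ge 0$ and $\tilde h \ge h_0$, while the saturation $\sigma = c_0$ for large argument together with $c_0 < b^*\mathrm{H}^{-1}$ gives the two-sided control $0\le \tilde h' \le M$ and, over the finite horizon, $\tilde h\le L$. Uniqueness and the contraction property then follow by writing the equations for the difference of two solutions and exploiting the Lipschitz continuity of $\sigma$ (from $\sigma \in W^{1,\infty}(\mathbb{R})$) together with the continuous-dependence estimate for the parabolic part. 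The step I expect to be the main obstacle is the closing of the a priori estimates: the terms $y(h'/h)\partial_y u$ and the quadratic boundary term $h'u(\tau,1)\varphi(1)$ in \eqref{a17} couple the interface velocity $h'$ with boundary traces of $u$, so controlling them requires simultaneously the bound on $h'$ (from \eqref{a18}, hence on $u(\tau,1)$) and the trace bound \eqref{a33} on $u(\tau,1)$ — a mild circularity that must be broken by choosing the parameter $\xi$ in Young's inequality \eqref{a3} small enough to absorb the boundary terms into the left-hand side before invoking Gronwall's inequality.
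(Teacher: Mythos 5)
Your first argument is essentially the paper's own proof: the paper disposes of this theorem by citing Theorem 2.4 of \cite{NHM} for local existence and Theorems 3.3 and 3.4 of \cite{kumazaki2020global} for global existence and continuous dependence on the initial data---and since those cited results are stated directly for the transformed fixed-domain problem $(P)$, even your Landau-transfer step is more than is needed. Your self-contained fixed-point sketch is close in spirit to what those references actually carry out, though note that your sign claim $u(\tau,1)\ge \sigma(h(\tau))/m_0$ (hence $\tilde{h}'\ge 0$) is not justified by (A\ref{A4})--(A\ref{A5}) as stated, and the paper itself only ever asserts the two-sided bound $|h_k'(\tau)|\le M_2$ rather than monotonicity of the interface.
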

\begin{proof} We refer the reader to Theorem 2.4 in \cite{NHM} for a statement of the local existence of weak solutions to problem $(P)$ and to Theorem 3.3 and Theorem 3.4 in \cite{kumazaki2020global} for a way to ensure the  global existence and continuous dependence with respect to initial data.\end{proof}
 We now define the finite element Galerkin approximation to \eqref{a17}--\eqref{h} on the finite dimensional subspace $V_k$.  The semi-discrete approximation $u_k$ and $h_k$ of $u$ and $h$ is now defined to be the mapping
$u_k: [0, T]\rightarrow V_k$ and  $h_k: [0, T]\rightarrow \mathbb{R}_+$ such that \eqref{a19}--\eqref{a22} holds.  We denote the semi-discrete form \eqref{a19}--\eqref{a22} of  problem $(P)$ by $(P_d)$.
\begin{definition}
	\label{D2}
	{\rm (Weak Solution to $(P_d)$)}. We call the couple $(u_k, h_k)$ a weak solution to problem {\rm ($P_d$)} if and only if there is a $S_{{T}}:= (0, {T})$  (for some ${T}>0$) such that 
	\begin{align*}  & h_k\in W^{1, \infty}(S_{{T}}) \;\; \text{with}\;\; h_0 < h_k({T}) \leq L\\
	 & {\clb u_k \in H^1(S_{{T}}, V_k)\cap L^{2}(S_{{T}}, H^1(0, 1)) \cap L^{\infty}(S_{{T}}, L^2(0, 1))}
	\end{align*} 
	and for all $\tau \in S_{{T}}$ it holds
\begin{align}
	\nonumber\displaystyle \left( \frac{\partial u_k}{\partial \tau},  \varphi_k \right)&-  \frac{h_k^{\prime}(\tau)}{h_k(\tau)}\left( y\frac{\partial u_k}{\partial y}, \varphi_k \right) +  \frac{1}{(h_k(\tau))^2}\left(\frac{\partial{u_k}}{\partial{y}}, \frac{\partial\varphi_k }{\partial y} \right)\\
	\label{a19}&- \frac{1}{h_k(\tau)} {\rm Bi}\left(\frac{b(\tau)}{m_{0}}- {\rm H}u_k(\tau, 0)\right) \varphi_k(0) +\frac{h_k^{\prime}(\tau)}{h_k(\tau)}u_k(\tau, 1)\varphi_k (1) = 0 \;\; \text{for all}\;\; \varphi_k \in V_k, \\
	\label{a20} & h_k^{\prime}(\tau) = A_0\left(u_k(\tau, 1)-\frac{\sigma(h_k(\tau))}{m_0}\right),\\
	\label{a21}& u_k(0) = u_{0,k}(y) \;\; {\rm for}\;\; y \in[0, 1],\\
	\label{a22}& h_k(0) = h_0.
\end{align}
\end{definition}
\begin{lemma}\label{max} { \clb
Assume {\rm (A\ref{A1})--(A\ref{A5})} hold.  Then there exist a time  $\hat{T}\in (0, T]$ and  positive constants $L, M_1, M_2$ (not depending of $k$)  such that for a.e. $\tau \in (0, \bar{T})$ the following inequalities hold true for the pair $(u_k,h_k)$ arising in Definition \ref{D2}:
\begin{enumerate} [{\rm(i)}]
    \item[(i)] $0< h_0 \leq h_k(\tau) \leq L$
    \item[(ii)] $0< u_k(\tau, y) < M_1 $
    \item[(iii)] $|h_k^{\prime}(\tau)|\leq M_2$.
\end{enumerate}
}
\end{lemma}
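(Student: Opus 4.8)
The plan is to realize problem $(P_d)$ as a finite-dimensional nonlinear ODE system and then to close it with $k$-independent a priori bounds obtained by a discrete maximum-principle argument. Writing $u_k(\tau,\cdot)=\sum_{i=0}^{N-1}c_i(\tau)\phi_i$, eliminating $h_k'$ by substituting \eqref{a20} into \eqref{a19}, and inserting $\varphi_k=\phi_j$ turns $(P_d)$ into a coupled system $\dot Y=F(Y)$ for the state $Y=(c_0,\dots,c_{N-1},h_k)$. Since the mass matrix $\big((\phi_i,\phi_j)\big)_{i,j}$ is positive definite, hence invertible, and the nonlinearities generated by $\sigma$, $b$ and the quotients $1/h_k$, $1/h_k^2$ are locally Lipschitz on $\{h_k>0\}$ (using (A\ref{A1})--(A\ref{A4})), the Picard--Lindel\"of theorem yields a unique maximal solution on an interval $(0,\hat T)$, with $\hat T$ characterized by blow-up of the coefficients or by $h_k$ leaving the admissible range $(0,L]$. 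First I would fix $\hat T$ small enough that $h_k$ stays in $[h_0,L]$, and then show a posteriori that the bounds (i)--(iii) force this interval to be the natural one.

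The core is the maximum principle (ii). Because $u_k(\tau,\cdot)$ is piecewise linear, its extrema over $[0,1]$ are attained at nodes, so it suffices to control the nodal values. For the upper bound I would take $M_1$ equal to the continuous bound $b^*/(\mathrm{H}m_0)$ from \cite{kumazaki2020global} and test \eqref{a19} with (the nodal interpolant of) $(u_k-M_1)^+$: the Robin term $-\frac{1}{h_k}\mathrm{Bi}\big(\frac{b}{m_0}-\mathrm{H}u_k(\tau,0)\big)\varphi_k(0)$ then has the sign that damps $u_k(\tau,0)$ once it exceeds $b/(\mathrm{H}m_0)$ (here (A\ref{A2}) gives $b\le b^*$), while the interface term $\frac{h_k'}{h_k}u_k(\tau,1)\varphi_k(1)$ is controlled through \eqref{a20} and the convection term is of lower order (treated in the final paragraph). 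For the lower bound I would test with the negative part $u_k^-$ and use $b\ge b_*>0$ together with $\sigma(0)>0$ (A\ref{A4}) to see that the boundary supplies, never depletes, mass, so the initial positivity $u_0\ge\sigma(0)$ (A\ref{A5}) is propagated. This produces $0<u_k<M_1$ with $M_1$ independent of $k$.

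Once $u_k(\tau,1)$ is bounded, the remaining estimates follow directly from the kinetic law \eqref{a20}. Since $\sigma$ is bounded (A\ref{A4}) and $u_k(\tau,1)\in(0,M_1)$, one reads off $|h_k'(\tau)|=A_0\big|u_k(\tau,1)-\sigma(h_k(\tau))/m_0\big|\le M_2$, which is (iii). For (i), matching the continuous analysis I would sharpen the lower bound on $u_k(\tau,1)$ so that $u_k(\tau,1)\ge\sigma(h_k(\tau))/m_0$ holds whenever $h_k(\tau)=h_0$; this invariance argument makes $\{h_k\ge h_0\}$ forward-invariant and gives $h_k(\tau)\ge h_0$. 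Integrating $|h_k'|\le M_2$ from $h_k(0)=h_0$ keeps $h_k\le h_0+M_2\hat T\le L$ after shrinking $\hat T$ if necessary. A standard continuation argument then shows that, since all three bounds are uniform in $k$ and in the length of the existence interval, the local solution extends up to the time $\hat T$ at which $h_k$ would reach $L$.

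I expect the genuine difficulty to be the discrete maximum principle, not the ODE bookkeeping. The trouble is that for piecewise linear elements the truncations $(u_k-M_1)^+$ and $u_k^-$ are in general not members of $V_k$, since their kinks fall at the crossing points of $u_k$ rather than at the mesh nodes, so the clean continuous testing identity is unavailable. I would resolve this by exploiting the monotonicity structure of the one-dimensional $\mathbb{P}_1$ discretization: the stiffness matrix $\big((\phi_i',\phi_j')\big)$ is tridiagonal with positive diagonal and nonpositive off-diagonals, hence an $M$-matrix, and the first-order contribution $-\frac{h_k'}{h_k}\big(y\,\partial_y u_k,\phi_j\big)$ is of lower order and can be dominated after possibly passing to a mass-lumped formulation. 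Equivalently, one tests with the nodal interpolant $I_k[(u_k-M_1)^+]$ and absorbs the interpolation error using Lemma \ref{lemma1}. Keeping every constant in these estimates independent of $k$ is precisely what guarantees that $L$, $M_1$ and $M_2$ do not depend on the mesh size, as the statement requires.
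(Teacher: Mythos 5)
Your overall architecture (ODE reformulation with Picard--Lindel\"of, then $k$-uniform bounds, then continuation) is sound, but your treatment of (ii) takes a genuinely different route from the paper's actual proof. The paper proves no discrete maximum principle at all: it takes (i) as built into the solution concept of Definition \ref{D2} (the admissibility $h_0 < h_k \le L$ is part of the definition), it obtains (ii) by inheriting the $L^\infty$ bound from the space-continuous analysis --- the arguments of Theorem 3.1 in \cite{kumazaki2020global}, combined with the one-dimensionality of the geometry --- at the price of a possibly small time $\hat T$, and it reads off (iii) directly from (i), (ii) and \eqref{a20}, exactly as you do. Notably, the route you propose is the one the paper explicitly flags only as an alternative ``in principle'': a discrete Stampacchia truncation as in \cite{jungel2005discrete}, or M-matrix/simplicial-element arguments as in \cite{Korotov}, which the authors remark could even upgrade the local $\hat T$ to a maximal time. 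So your plan, if completed, would deliver a stronger statement than the paper's proof; what each approach buys is clear: the paper trades sharpness of $\hat T$ for a short citation-based argument, you trade substantial discrete-analysis work for a self-contained, potentially global bound.

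As written, though, your discrete maximum principle sketch has concrete gaps. First, the fallback of testing with $I_k[(u_k-M_1)^+]$ and ``absorbing the interpolation error using Lemma \ref{lemma1}'' does not work: the estimates of Lemma \ref{lemma1} require $\psi \in H^2(0,1)$, while the truncation $(u_k - M_1)^+$ has kinks at the crossing points and is only Lipschitz/$H^1$, so none of those interpolation bounds apply to it; the discrete Stampacchia trick of \cite{jungel2005discrete} needs genuinely different machinery. Second, the M-matrix route requires the mass matrix to cooperate: the consistent $\mathbb{P}_1$ mass matrix in \eqref{a19} is tridiagonal, not diagonal, and the semi-discrete maximum principle for the consistent-mass Galerkin scheme fails in general; your proposed remedy of mass lumping proves the bounds for a scheme \emph{different} from the pair $(u_k,h_k)$ of Definition \ref{D2} that the lemma is about. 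Third, the convection coefficient $h_k^{\prime}/h_k$ and the interface term involve exactly the quantity that (iii) is supposed to bound, so the sign conditions on the off-diagonal entries (a mesh-P\'eclet-type restriction) must be closed by the bootstrap you only gesture at; if that restriction couples $k$ to $|h_k^{\prime}|$, the $k$-independence of $M_1, M_2$ --- the whole point of the lemma --- is at risk. Finally, truncation arguments at best yield $u_k \ge 0$, not the strict inequality $0 < u_k$ claimed in (ii), which needs an extra argument from $b \ge b_* > 0$. None of these is obviously fatal --- which is presumably why the paper cites \cite{jungel2005discrete} and \cite{Korotov} rather than carrying the program out --- but they must all be filled before your sketch is a proof, whereas the paper's argument avoids them entirely by leaning on the continuous result.
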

\begin{proof} {\clb (i) is built in the concept of weak solution detailed in Definition \ref{D2}. It does not require a proof. We added it here simply to stress the importance of the fact the we work exclusively in a bounded moving domain. (ii) is the main statement here. This holds true as a consequence of the fact that the space continuous version of the statement (i.e. $0< u(\tau, y) < M_1 $) holds true; we rely on the arguments of the proof of  Theorem 3.1 in \cite{kumazaki2020global}, combined with the fact that the treated geometry is one dimensional. Hence,  $\hat{T}>0$ is possibly small, which is sufficient for deriving our next results. Note though that a discrete version of the Stampacchia trick, worked out with details in \cite{jungel2005discrete}, can potentially be applied here as well in order to replace the local time $\hat{T}$ with a maximal time.  Alternative arguments employing the structure of the problem as in \cite{Walter} or based on linear simplicial finite elements as in \cite{Korotov} can also be used in principle. (iii) is a direct consequence of (i) and (ii) combined with \eqref{a20}.}

\end{proof}
\begin{theorem}
	Let  the hypothesis of Lemma \ref{max} be  fulfilled. Then  it exists a unique solution 
	\begin{align*}
	{\clb  (u_k, h_k) \in H^1(S_{\hat{T}}, V_k)\cap L^{2}(S_{\hat{T}}, H^1(0, 1)) \cap L^{\infty}(S_{\hat{T}}, L^2(0, 1)) \times W^{1, \infty}(S_{\hat{T}})}
	\end{align*}
	in the sense of Definition \ref{D2}. Furthermore, there exists a constant $\tilde{c}>0$ (independent of $k$) such that
	\begin{align}
\label{energy}	{\clb \max_{0\leq \tau\leq \hat{T}}} \| u_k\|_{L^2(0, 1)} ^2 + \int_0^{\hat{T}} \left\Vert \frac{\partial u_k}{\partial y}\right\Vert^2_{L^2(0,1)}d \tau \leq \tilde{c}.
\end{align}
\end{theorem}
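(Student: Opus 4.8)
The plan is to treat the semi-discrete problem $(P_d)$ as a coupled system of ordinary differential equations for the nodal coefficients of $u_k$ together with the scalar unknown $h_k$, to establish local-in-time existence and uniqueness via the Picard--Lindel\"of theorem, and then to use the a priori bounds of Lemma \ref{max} both to continue the solution up to $\hat T$ and to derive the energy estimate \eqref{energy}.

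First I would expand $u_k(\tau,\cdot) = \sum_{j=0}^{N-1}\alpha_j(\tau)\phi_j$ in the basis of $V_k$ and insert this representation into \eqref{a19}, taking successively $\varphi_k = \phi_i$. The term $(\partial_\tau u_k,\phi_i)$ produces the mass matrix $M_{ij} = (\phi_j,\phi_i)$, which is symmetric positive definite and hence invertible; consequently the system can be written in normal form $\dot\alpha = F(\alpha,h_k)$ for the coefficient vector $\alpha = (\alpha_0,\dots,\alpha_{N-1})$, coupled with $\dot h_k = G(\alpha,h_k)$ coming from \eqref{a20}. Because $\sigma,\beta \in C^1 \cap W^{1,\infty}$ and because the factors $1/h_k$ and $h_k'/h_k$ are smooth as long as $h_k$ stays bounded away from zero, the right-hand side $(F,G)$ is locally Lipschitz on the region $\{h_k \ge h_0/2\}$. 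Picard--Lindel\"of then yields a unique maximal solution. The bounds of Lemma \ref{max} --- in particular $h_0 \le h_k \le L$, $0 < u_k < M_1$ and $|h_k'| \le M_2$, all independent of $k$ --- keep the trajectory inside the Lipschitz region and rule out blow-up, so the solution exists on all of $[0,\hat T]$ and enjoys the stated regularity; in particular $\partial_\tau u_k \in V_k$ with coefficients bounded in $L^2(S_{\hat T})$ gives $u_k \in H^1(S_{\hat T},V_k)$.

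For the energy estimate I would test \eqref{a19} with the admissible choice $\varphi_k = u_k$. The leading term gives $\tfrac12\tfrac{d}{d\tau}\|u_k\|^2$, the diffusion term gives the coercive contribution $h_k^{-2}\|\partial_y u_k\|^2 \ge L^{-2}\|\partial_y u_k\|^2$, and the remaining three terms are the nonlinear transport term $-\tfrac{h_k'}{h_k}(y\,\partial_y u_k, u_k)$ and the two boundary terms at $y=0$ and $y=1$. For the transport term I would use $|h_k'|\le M_2$, $h_k\ge h_0$ and Cauchy--Schwarz followed by Young's inequality \eqref{a3} to split it as $\xi\|\partial_y u_k\|^2 + C_\xi\|u_k\|^2$. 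The boundary term at $y=0$ expands into a dissipative square $\tfrac{\mathrm{Bi}\,\mathrm H}{h_k}u_k(0)^2 \ge 0$ (which may be kept on the coercive side) plus a linear driving term bounded via $|u_k(0)|\le\|u_k\|_\infty$ and the boundedness of $b$ by $b^*$, while the $y=1$ term is bounded by $\tfrac{M_2}{h_0}u_k(1)^2 \le \tfrac{M_2}{h_0}\|u_k\|_\infty^2$.

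The crucial step --- and the one I expect to be the main obstacle --- is controlling the interface traces $u_k(0)^2$ and $u_k(1)^2$. Here I would invoke the interpolation-trace inequality \eqref{a33} with $\theta = 1/2$ in the form $\|u_k\|_\infty^2 \le \hat c\bigl(\xi\|\partial_y u_k\|^2 + (\xi+c_\xi)\|u_k\|^2\bigr)$, choosing the free parameter $\xi$ small enough that the accumulated coefficient of $\|\partial_y u_k\|^2$ on the right (arising from the transport splitting and both trace estimates) is strictly smaller than the coercivity constant $L^{-2}$. This absorption leaves a differential inequality of the form $\tfrac{d}{d\tau}\|u_k\|^2 + c_1\|\partial_y u_k\|^2 \le c_2\|u_k\|^2 + c_3$, with $c_1,c_2,c_3$ depending only on $L,h_0,M_2,b^*,\mathrm{Bi},\mathrm H$ and $\hat c$, hence independent of $k$. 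Gr\"onwall's inequality then bounds $\max_{0\le\tau\le\hat T}\|u_k\|^2$, and integrating the residual gradient term over $(0,\hat T)$ produces the second contribution in \eqref{energy}, yielding \eqref{energy} with $\tilde c$ independent of $k$. The delicate point throughout is the bookkeeping of the various $\xi$'s so that the final coefficient of $\|\partial_y u_k\|^2$ remains genuinely positive; this is precisely where the one-dimensional interpolation-trace inequality does the essential work.
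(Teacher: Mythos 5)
Your proposal follows essentially the same route as the paper's proof: reduction of $(P_d)$ to an ODE system for the Galerkin coefficients coupled with $h_k$, whose right-hand side is shown to be Lipschitz (the paper verifies this explicitly for $G_1$ and $G_2$, using $h_k\geq h_0$ just as you restrict to $\{h_k\geq h_0/2\}$) so that Picard--Lindel\"of together with the uniform bounds of Lemma \ref{max} gives existence and uniqueness on $[0,\hat T]$, followed by testing with $\varphi_k=u_k$, handling the transport and trace terms via the interpolation-trace inequality with $\theta=1/2$ (the paper's \eqref{aa2}, your \eqref{a33}), absorbing the gradient contributions through Young's inequality with $\xi$ small enough to keep the coercivity constant positive, and concluding with Gronwall plus time integration. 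The only cosmetic deviation is that you keep the dissipative boundary contribution $\mathrm{Bi}\,\mathrm{H}\,u_k(\tau,0)^2/h_k$ on the coercive side, whereas the paper discards it using the positivity of $u_k$ from Lemma \ref{max}; the two treatments are equivalent.
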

\begin{proof}

{\clb Let $V_k$ be the finite dimensional subspace defined in \eqref{finite} constructed based on the span of the hat functions $\{\phi_j\}, j \in \{0, 1,\cdots, N-1 \}$. Let $\alpha_j:(0, \hat{T}) \rightarrow \mathbb{R}$ denote the Galerkin projection coefficient for $j$th degree of freedom.   Then the finite-dimensional Galerkin approximation of the function $u$ is defined by}
\begin{align*}
   u_k (\tau, y) := \sum_{j=0}^{N-1} \alpha_j(\tau) \phi_j(y), 
\end{align*}
where the coefficients $\alpha_j(\tau),\; j \in \{0, 1,\dots N-1\}$ are determined by the following relations:
\begin{align}
\nonumber\displaystyle \left( \frac{\partial u_k}{\partial \tau},  \varphi_k \right)-  \frac{h_k^{\prime}(\tau)}{h_k(\tau)}\left( y\frac{\partial u_k}{\partial y}, \varphi_k \right) &+  \frac{1}{(h_k(\tau))^2}\left(\frac{\partial{u_k}}{\partial{y}}, \frac{\partial\varphi_k}{\partial y} \right)\\
\label{a5}&- \frac{1}{h_k(\tau)} {\rm Bi}\left(\frac{b(\tau)}{m_{0}}- \text{H}u_k(\tau, 0)\right) \varphi_k(0) +\frac{h_k^{\prime}(\tau)}{h_k(\tau)}u_k(\tau, 1)\varphi_k(1) = 0,\\
 \label{a6}&\hspace{-4.7cm} h_k^{\prime}(\tau) = A_0\left(u_k(\tau, 1)-\frac{\sigma(h_k(\tau))}{m_0}\right), \;\;\;\tau\in(0, \hat{T})
 \end{align}
 for all  $\varphi_k \in \text{span}\{\phi_j\},\; j\in\{0,1,\cdots,N-1\}$
 and
 \begin{align}
\label{initial_u}& \alpha_j(0) = (u_{0, k}, \phi_j),\\
\label{initial_h}& h_k(0) = h_0.
\end{align}
Taking in \eqref{a5} and \eqref{a6} as test function $\varphi_k = \phi_j$ for $ j\in\{0, 1,\cdots, N-1\}$, we obtain the following system of ordinary differential equations for the unknown $\alpha = (\alpha_j)_{j = 0, 1,\cdots, N-1}$ and $h_k$:
\begin{align}
&  \sum_{i=0}^{N-1}M_i\alpha_i^{\prime}(\tau) - \frac{h_k^{\prime}}{h_k} \sum_{i=0}^{N-1}K_i\alpha_i + \frac{1}{h_k^2} \sum_{i=0}^{N-1}A_i\alpha_i = \frac{1}{h_k} {\rm Bi} \left( \frac{b(\tau)}{m_0} \phi(0) - \text{H} \alpha \right) -\frac{h_k^{\prime}}{h_k} \alpha =: G_1(\alpha, h_k),\\
\label{a8}& h_k^{\prime}(\tau) = A_0\left(\sum_{i=0}^{N-1}\alpha_i \phi_i(1)-\frac{\sigma(h_k(\tau))}{m_0}\right)=: G_2(\alpha, h_k),
\end{align}
where 
\begin{align}
\label{matrixM}&(M_i)_j := \int_{0}^1  \phi_i \phi_j dy,\\
\label{matrixK}&(K_i)_j := \int_{0}^1 y \frac{\partial \phi_i}{\partial y} \phi_jdy,\\
 \label{matrixA}&(A_i)_j := \int_{0}^1  \frac{\partial \phi_i}{\partial y} \frac{\partial \phi_j}{\partial y} dy.
\end{align}
Firstly, we prove that $G_2$ is Lipschitz. 
Let $(\alpha, h_k)$ and  $(\beta, \tilde{h}_k)$ be two pairs. 
\begin{align}
\label{a7}\left| G_2(\alpha, h_k) - G_2(\beta, \tilde{h}_k) \right| \leq A_0 \left(\sum_{i=0}^{N-1} \left| \alpha_i(\tau)- \beta_i(\tau)\right|\left|\phi_i(1)\right| + \frac{1}{m_0} \left| \sigma(h_k(\tau)) - \sigma(\tilde{h}_k(\tau))\right|\right).
\end{align}
Using (A4) in \eqref{a7}, we get
\begin{align*}
\left| G_2(\alpha, h_k) - G_2(\beta, \tilde{h}_k) \right| &\leq A_0 \left(\sum_{i=0}^{N-1} \left| \alpha_i(\tau)- \beta_i(\tau)\right| \left|\phi_i(1)\right| + \frac{\mathcal{L}}{m_0} \left| h_k(\tau) - \tilde{h}_k(\tau)\right|\right)\\
&\leq \mathcal{M} \left(\sum_{i=0}^{N-1} \left| \alpha_i(\tau)- \beta_i(\tau)\right| + \left| h_k(\tau) - \tilde{h}_k(\tau)\right|\right)\\
&=\mathcal{M}|(\alpha, h_k)- (\beta, \tilde{h}_k)|,
\end{align*}
where $\mathcal{L}$ is a Lipschitz constant and
$$ \mathcal{M}:= \max \left\{ A_0 \max_{0\leq i\leq N-1}|\phi_i(1)|, \frac{A_0 \mathcal{L}}{m_0}\right\}.$$
Thus, $G_2$ is Lipschitz. 
Now, we show that $G_1$ is Lipschitz. 
\begin{align}
  \label{aaa1}  G_1(\alpha, h_k) - G_1(\beta, \tilde{h}_k) = {\rm Bi} \frac{b(\tau)}{m_0} \left( \frac{1}{h_k} - \frac{1}{\tilde{h}_k}\right)\phi(0) - {\rm Bi}\,\text{H} \left( \frac{\alpha}{h_k} - \frac{\beta}{\tilde{h}_k} \right) - \left( \frac{h_k^{\prime}}{h_k} \alpha - \frac{\tilde{h}_k^{\prime}}{\tilde{h}_k}  \beta\right).
\end{align}
Using (A2) in \eqref{aaa1} yields
\begin{align*}
\left| G_1(\alpha, h_k) - G_1(\beta, \tilde{h}_k) \right| \leq & \;\text{Bi} \frac{b^*}{m_0h_k \tilde{h}_k}|h_k - \tilde{h}_k||\phi(0)|+ \text{Bi}\,\text{H} \left| \frac{\alpha}{h_k} - \frac{\beta}{\tilde{h}_k} \right|   \\
& + \left|\frac{h_k^{\prime}}{h_k} \alpha - \frac{\tilde{h}_k^{\prime}}{\tilde{h}_k}  \beta \right|\\
 =& \sum_{\ell=1}^3 I_{\ell},
\end{align*}
where 
\begin{align*}
I_1 &:= \text{Bi} \frac{b^*}{m_0h_k \tilde{h}_k}|h_k - \tilde{h}_k||\phi(0)| \leq \text{Bi} \frac{b^*}{m_0h_k \tilde{h}_k}|h_k - \tilde{h}_k|,\\
I_2&:= \text{Bi}\,\text{H} \left| \frac{\alpha}{h_k} - \frac{\beta}{\tilde{h}_k} \right| \leq \text{Bi}\,\text{H} \left(|\alpha| \frac{| h_k - \tilde{h}_k|}{h_k \tilde{h}_k} + \frac{|\alpha - \beta|}{ \tilde{h}_k}\right),\\
\nonumber I_3&:= \left|\frac{h_k^{\prime}}{h_k} \alpha - \frac{\tilde{h}_k^{\prime}}{\tilde{h}_k}  \beta \right|\\
\nonumber&= \left|h_k^{\prime}  \left(\frac{\alpha}{h_k} - \frac{\beta}{\tilde{h}_k}  \right) + \frac{\beta}{\tilde{h}_k} \left( h_k^{\prime} - \tilde{h}_k^{\prime} \right)\right| \\
\nonumber& \leq \left| h_k^{\prime}\right| \left| \frac{\alpha}{h_k} - \frac{\beta}{\tilde{h}_k} \right| + \mathcal{L}\frac{|\beta|}{|\tilde{h}_k|} \left| h_k - \tilde{h}_k  \right|\\
& \leq \left| h_k^{\prime}\right| \left(|\alpha| \frac{| h_k - \tilde{h}_k|}{h_k \tilde{h}_k} + \frac{|\alpha - \beta|}{ \tilde{h}_k}\right) + \mathcal{L}\frac{|\beta|}{|\tilde{h}_k|} \left| h_k - \tilde{h}_k  \right|.
\end{align*}
This shows that $G_1$ is Lipschitz continuous. By {\clb standard arguments for systems of ordinary differential equations}, the problem \eqref{initial_u}-\eqref{a8} has a unique solution $$(\alpha, h_k )\in C^1([0, \hat{T}])^N \times W^{1, \infty}(0, \hat{T}).$$
We now prove the uniform estimate for the solution $u$ to the finite dimensional problem.\\
Taking $\varphi = u_k$ in \eqref{a5} yields
\begin{align}
\nonumber\frac{1}{2}\frac{d}{d\tau} \| u_k(\tau) \|^2 &+ \frac{1}{(h_k(\tau))^2} \left\Vert\frac{\partial u_k(\tau)}{\partial y} \right\Vert^2 = \int_0^1 \frac{h_k^{\prime}}{h_k} y \frac{\partial u_k}{\partial y} u_k dy\\
\label{uni}& + \frac{1}{h_k(\tau)} \text{Bi}\left(\frac{b(\tau)}{m_{0}}- \text{H}u_k(\tau, 0)\right) u_k(\tau, 0) +\frac{h_k^{\prime}(\tau)}{h_k(\tau)}u_k(\tau, 1)u_k(\tau, 1). 
\end{align}
Using H\"{o}lder’s inequality for the first term on the right hand side of \eqref{uni}, it holds that
\begin{align}
\nonumber\frac{1}{2}\frac{d}{d\tau} \| u_k(\tau) \|^2 + \frac{1}{(h_k(\tau))^2} \left\Vert\frac{\partial u_k(\tau)}{\partial y} \right\Vert^2 & \leq \frac{|h_k^{\prime}|}{h_k} \left\Vert \frac{\partial u_k}{\partial y }\right\Vert_{L^2(\Omega)}\left\Vert u_k\right\Vert_{L^2(\Omega)}\\
&\label{aa3} + \frac{\rm{Bi}}{h_k} \frac{b^*}{m_0}|u_k(\tau, 0)| +
 \frac{\left|h_k^{\prime}\right|}{h_k} \left| u_k(\tau, 1) \right|^2.
\end{align}
We note here that, by the Sobolev's embedding inequality in one space dimension, it holds
\begin{align}
\label{aa2}| \vartheta (\tau, y)|^2 \leq C_e \left\Vert \vartheta (\tau) \right\Vert_{H^1(0, 1)}  \left\Vert \vartheta (\tau) \right\Vert_{L^2(0, 1)}\;\; \text{for} \;\vartheta \in H^1(0, 1) \;\text{and}\; y \in [0, 1],    
\end{align}
where $C_e$ is a positive constant. Using \eqref{aa2},  the third term on the right hand side of \eqref{aa3} becomes
\begin{align}
   \nonumber  \frac{\left|h_k^{\prime}\right|}{h_k} \left| u_k(\tau, 1) \right|^2 &\leq C_e\frac{\left\Vert h_k^{\prime}\right\Vert_{L^{\infty}(S_{\hat{T}})}}{h_0} \left\Vert u_k(\tau) \right\Vert_{H^1(0, 1)} \left\Vert u_k(\tau) \right\Vert_{L^2(0, 1)}\\
   \label{aa4}  &\leq C_e\frac{\left\Vert h_k^{\prime}\right\Vert_{L^{\infty}(S_{\hat{T}})}}{h_0} \left( \left\Vert \frac{\partial u_k(\tau)}{\partial y} \right\Vert_{L^2(0, 1)} \left\Vert u_k(\tau) \right\Vert_{L^2(0, 1)} +   \left\Vert u_k(\tau) \right\Vert_{L^2(0, 1)}^2\right).
\end{align}
 Using \eqref{aa4}, \eqref{aa3}  becomes
\begin{align}
    \nonumber\frac{1}{2}\frac{d}{d\tau} \| u_k \|^2 + \frac{1}{(h_k)^2} \left\Vert\frac{\partial u_k}{\partial y} \right\Vert^2 \leq & (1+C_e)\frac{\left\Vert h_k^{\prime}\right\Vert_{L^{\infty}(S_{\hat{T}})}}{h_0} \left\Vert \frac{\partial u_k}{\partial y} \right\Vert_{L^2(0, 1)} \left\Vert u_k \right\Vert_{L^2(0, 1)}\\
    \label{aa5}&+ C_e\frac{\left\Vert h_k^{\prime}\right\Vert_{L^{\infty}(S_{\hat{T}})}}{h_0}   \left\Vert u_k \right\Vert_{L^2(0, 1)}^2 + \frac{1}{h_0} \frac{b^*}{m_0}\|u_k\|_{H^1(0, 1)}.
\end{align}
Using Young's inequality, \eqref{aa5} leads to 
\begin{align}
 \nonumber  \frac{1}{2}\frac{d}{d\tau} \| u_k \|^2 + \frac{1}{2L^2} \left\Vert\frac{\partial u_k}{\partial y} \right\Vert^2  \leq & (1+C_e)\frac{\left\Vert h_k^{\prime}\right\Vert_{L^{\infty}(S_{\hat{T}})}}{h_0} \left( \xi \left\Vert \frac{\partial u_k}{\partial y} \right\Vert_{L^2(0, 1)}^2 + c_{\xi}\left\Vert u_k \right\Vert_{L^2(0, 1)}^2 \right) \\
  \nonumber  & +  C_e\frac{\left\Vert h_k^{\prime}\right\Vert_{L^{\infty}(S_{\hat{T}})}}{h_0}   \left\Vert u_k \right\Vert_{L^2(0, 1)}^2 + \xi \left\Vert \frac{\partial u_k}{\partial y} \right\Vert_{L^2(0, 1)}^2 \\
  \nonumber  &+ \xi \left\Vert u_k \right\Vert_{L^2(0, 1)}^2+ \frac{c_{\xi}}{h_0^2} \frac{(b^*)^2}{m_0^2}.   
\end{align}
Finally, we get the following inequality
\begin{align}
    \label{aa7}\frac{1}{2}\frac{d}{d\tau} \| u_k \|^2 +  M_1   \left\Vert\frac{\partial u_k}{\partial y} \right\Vert^2 \leq M_2 \left\Vert  u_k \right\Vert_{L^2(0, 1)}^2 + M_3,
\end{align}
where 
\begin{align*}
&M_1: = \frac{1}{2L^2} - \left( (1+C_e)\frac{\left\Vert h_k^{\prime}\right\Vert_{L^{\infty}(S_{\hat{T}})}}{h_0} +1\right) \xi, \\
&M_2:= \frac{\left\Vert h_k^{\prime}\right\Vert_{L^{\infty}(S_{\hat{T}})}}{h_0} \left(c_{\xi} +  C_e (c_{\xi} + 1) \right) + \xi,\\
& M_3:= \frac{c_{\xi}}{h_0^2} \frac{(b^*)^2}{m_0^2}.   
\end{align*}
Choosing a sufficiently small $\xi$ with $M_1>0$ and then applying Gronwall's inequality  gives the following inequality holds
\begin{align}
 \label{aa9}  \| u_k(\tau) \|^2  \leq c(\hat{T},h_0, C_e) \left(  \| u_k(0) \|^2 + M_3 \hat{T}   \right),   
\end{align}
for all $0\leq \tau \leq \hat{T}.$
Since $\| u_k(0) \|^2 \leq \| u_{0,k} \|^2$, \eqref{aa9} yields
\begin{align}
    \label{aa8} \max_{0\leq \tau\leq \hat{T}} \| u_k(\tau) \|^2  \leq \tilde{c}. 
\end{align}
Integrating \eqref{aa7} from $0$ to $\hat{T}$ and employ the inequality \eqref{aa8} to get
\begin{align*}
\int_0^{\hat{T}}\left\Vert\frac{\partial u_k}{\partial y} \right\Vert^2 d\tau \leq \tilde{c}.
\end{align*}
This concludes the proof of \eqref{energy}. \end{proof}
\begin{remark}
 The entries of the  matrices $M, \;K$ and $A$ given in \eqref{matrixM}, \eqref{matrixK} and \eqref{matrixA} are computed explicitly benefiting of the structure of the basis elements $\phi_j \in V_k$, usually piecewise polynomials of some preset degree defined in $\Omega$; see \cite{nepal2021moving} for the explicit form of the matrix $K$ and $A$ when using as basis  piecewise linear functions.
\end{remark}
\section{Main results} \label{mainresults}
In this Section, we prove \textit{a priori} and \textit{a posteriori} error estimates between the weak solution to $(P)$ and weak solution to a
semi-discrete version of $(P)$. The discretization in space is done via the finite element method \cite{larsson2008partial}.
\begin{theorem} \label{apriori}
(A priori error estimate) Assume {\rm (A\ref{A1})--(A\ref{A5})} hold. Additionally, take $u_0 \in H^2(0, 1)$. Let $(u, h)$ and $(u_k, h_k)$ be the corresponding weak solutions to problem $(P)$ and $(P_d)$ in the sense of Definition \ref{D1} and Definition \ref{D2}, respectively. Then there exists a constant  $c>0$ (not depending on k) such that
\begin{align}
\label{aprioriestimate}\| u - u_k \|^2_{L^{\infty}(S_{\hat{T}}, L^2(0, 1)))\cap L^2(S_{\hat{T}}, H^1(0, 1))} + \| h- h_k\|^2_{H^1(S_{\hat{T}})} \leq ck^2.  
\end{align}
\end{theorem}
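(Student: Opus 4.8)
The plan is to follow the classical semi-discrete finite element strategy: split the concentration error through the Lagrange interpolant $I_k$ and then close a coupled energy/Gronwall argument for the finite element part together with the moving-boundary error. First I would write $u - u_k = \rho + \eta$, where $\rho := u - I_k u$ is the interpolation error and $\eta := I_k u - u_k \in V_k$ is the discrete finite element error, and set $e_h := h - h_k$. The interpolation part $\rho$ is controlled immediately by Lemma \ref{lemma1}: its $L^2$ norm is $O(k^2)$, its $H^1$-seminorm is $O(k)$, and its boundary traces $|\rho(0)|,|\rho(1)|$ are $O(k^{1+\theta})$; since nodal interpolation commutes with $\partial_\tau$, the same estimates apply to $\partial_\tau\rho = \partial_\tau u - I_k\partial_\tau u$, and it is precisely here that the extra hypothesis $u_0\in H^2(0,1)$, combined with the parabolic regularity carried by the weak solution of $(P)$ (in particular $u\in L^2(S_{\hat T},H^2(0,1))$ from Definition \ref{D1}), is invoked to make $\|u\|_{H^2}$ and the relevant time derivative integrable in $\tau$. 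It then suffices to bound $\eta$ and $e_h$.

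Next I would derive the error equation by testing \eqref{a17} with $\varphi=\eta\in V_k\subset H^1(0,1)$ and subtracting \eqref{a19} with $\varphi_k=\eta$. The time-derivative term yields $\tfrac12\frac{d}{d\tau}\|\eta\|^2$ after moving the $\partial_\tau\rho$ contribution to the right-hand side, and the diffusion term yields the coercive quantity $h_k^{-2}\|\partial_y\eta\|^2$ once I write $\partial_y(u-u_k)=\partial_y\rho+\partial_y\eta$. The nonlinear coefficients are handled by the standard add-and-subtract device: each difference $\tfrac{1}{h^2}-\tfrac{1}{h_k^2}$, $\tfrac{h'}{h}-\tfrac{h_k'}{h_k}$, etc., is rewritten in terms of $e_h$ and $e_h'$, whose smallness is borrowed from the bounds $0<h_0\le h_k\le L$ and $|h_k'|\le M_2$ of Lemma \ref{max}. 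In parallel I would subtract \eqref{a20} from \eqref{a18} to obtain the moving-boundary error equation
\begin{align*}
e_h' = A_0\!\left[\big(u(1)-u_k(1)\big) - \tfrac{1}{m_0}\big(\sigma(h)-\sigma(h_k)\big)\right],
\end{align*}
multiply by $e_h$, and use the Lipschitz bound $|\sigma(h)-\sigma(h_k)|\le\mathcal{L}|e_h|$ from (A\ref{A4}) together with $u(1)-u_k(1)=\rho(1)+\eta(1)$ to get $\tfrac12\frac{d}{d\tau}|e_h|^2 \le C\big(|e_h|^2+|\rho(1)|^2+|\eta(1)|^2\big)$.

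The crux is the treatment of the interface terms at $y=0$ and, above all, at $y=1$, where the point values $\eta(0),\eta(1)$ appear and where the boundary data couple $\eta$ to $e_h$ and $e_h'$. These are exactly the terms for which the interpolation-trace inequality \eqref{a33} is designed: I would estimate $|\eta(1)|^2\le \hat{c}\,\|\eta\|\,\|\eta\|_{H^1(0,1)}$ and split it via Young's inequality \eqref{a3} into a term $\varepsilon\|\partial_y\eta\|^2$, absorbed into the coercive left-hand side (keeping the net coefficient positive, in the spirit of the quantity $M_1$ in \eqref{aa7}), plus lower-order multiples of $\|\eta\|^2$. The feedback $\eta(1)\to e_h'\to\eta$-equation is controlled because $e_h'$ is itself estimated through the same trace $u(1)-u_k(1)$. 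Combining the two differential inequalities, choosing $\varepsilon$ small, and collecting the consistency terms — all $O(k^2)$ after squaring, the $H^1$-interpolation error being dominant and the $O(k^{1+\theta})$ trace errors contributing only $O(k^{2+2\theta})$ — I would reach
\begin{align*}
\frac{d}{d\tau}\big(\|\eta\|^2+|e_h|^2\big) + c_1\|\partial_y\eta\|^2 \le c_2\big(\|\eta\|^2+|e_h|^2\big) + c_3 k^2.
\end{align*}
Since $e_h(0)=0$ and $\eta(0)=I_k u_0 - u_{0,k}$ can be made $O(k^2)$ by the choice of $u_{0,k}$ (e.g.\ $u_{0,k}=I_k u_0$, so $\eta(0)=0$), Gronwall's inequality closes the estimate for $\eta$ and $e_h$; a final triangle inequality $\|u-u_k\|\le\|\rho\|+\|\eta\|$ with the Lemma \ref{lemma1} bounds on $\rho$ then delivers \eqref{aprioriestimate}. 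I expect the main obstacle to be precisely this $y=1$ interface coupling: retaining coercivity after absorbing the trace terms, and ensuring that the mutual dependence of the $\eta$- and $e_h$-equations does not break the Gronwall closure, is the delicate point where the one-dimensional interpolation-trace inequality must be used sharply.
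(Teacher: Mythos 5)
Your overall strategy — energy identity for the discretized error, interpolation-trace inequality \eqref{a33} plus Young's inequality to absorb the $y=0$ and $y=1$ trace terms into the coercive gradient term, a coupled differential inequality for the concentration error and $e_h=h-h_k$, and Gronwall — is exactly the architecture of the paper's proof, and your diagnosis that the $y=1$ coupling is the delicate point is correct. But your specific decomposition $u-u_k=\rho+\eta$ with $\rho=u-I_ku$ has a genuine gap at the time-derivative term. After subtracting the discrete equation \eqref{a19} from \eqref{a17} tested with $\eta$, you must handle $(\partial_\tau\rho,\eta)$, and you claim that ``since nodal interpolation commutes with $\partial_\tau$, the same estimates apply to $\partial_\tau\rho=\partial_\tau u-I_k\partial_\tau u$.'' Under the regularity of Definition \ref{D1}, this step fails twice over: applying Lemma \ref{lemma1} to $\partial_\tau u$ requires $\partial_\tau u(\tau,\cdot)\in H^2(0,1)$, whereas the weak solution only provides $u\in W^{1,2}(Q(T))$, i.e.\ $\partial_\tau u\in L^2(Q(T))$; worse, $I_k\partial_\tau u$ is not even well defined, since nodal Lagrange interpolation requires point values and $\partial_\tau u(\tau,\cdot)$ is merely an $L^2(0,1)$ function for a.e.\ $\tau$. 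The hypothesis $u_0\in H^2(0,1)$, which you invoke for this purpose, gives spatial regularity of $u$, not of $\partial_\tau u$; no estimate of the form $\|\partial_\tau\rho\|\le Ck^{s}\|\partial_\tau u\|_{H^s}$ is available here, so the $O(k^2)$ consistency bookkeeping for the time term is unjustified as written.

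The paper's proof is structured precisely to avoid differentiating the interpolation error in time. It never splits off $\eta$: it keeps $e=u-u_k$ as the unknown, takes the test function $v_k:=w_k-u_k\in V_k$ with $w_k=I_ku$, and uses the decomposition $v_k=(w_k-u)+e$, so that the time term produces $\tfrac12\tfrac{d}{d\tau}\|e\|^2$ plus the pairing $(\partial_\tau e,\,w_k-u)$, which is bounded by $\tfrac12 k^2\|\partial_\tau e\|^2+\tfrac{\gamma_1^2k^2}{2}\|u\|^2_{H^2(0,1)}$ — only the $L^2$-in-time integrability of $\partial_\tau e$ (from $u\in W^{1,2}(Q(T))$ and $u_k\in H^1(S_{\hat T},V_k)$) and the $L^2(S_{\hat T},H^2)$ regularity of $u$ itself are needed, never any smoothness of $\partial_\tau u$ in $y$. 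To repair your argument you would either have to prove the extra regularity $\partial_\tau u\in L^2(S_{\hat T},H^1(0,1))$ (not established in the paper), or restructure along the paper's lines. Two smaller points: your arrangement of the add-and-subtract can be made to place the coefficient differences against $\partial_y u$ (bounded via $L^\infty(S_{\hat T},H^1)$ from Definition \ref{D1}), whereas the paper's arrangement hits $\partial_y u_k$ and therefore needs the $k$-uniform discrete bound \eqref{lifting} — your variant is slightly cheaper there; and note that the paper obtains the $\|h'-h_k'\|_{L^2}$ part of the $H^1(S_{\hat T})$ bound by integrating the pointwise inequality \eqref{a30} after the Gronwall step, a closing step you gesture at but should state explicitly.
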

\begin{proof}
We consider the time interval $S_{\hat{T}}$ on which both continuous and discrete solutions to \eqref{aa17}--\eqref{aa22} exist and are uniquely defined.  Let $e:= u - u_k$ and $h-h_k$ be the pointwise errors of the approximation.  By subtracting \eqref{a19} from \eqref{a17} and choosing $\varphi = v_k \in V_k$, we obtain the following identity:
\begin{align}
\nonumber \left( \frac{\partial u}{\partial \tau}, v_k \right) - & \left( \frac{\partial u_k}{\partial \tau}, v_k \right)  +  \frac{1}{h^2} \left( \frac{\partial{u}}{\partial{y}}, \frac{\partial v_k }{\partial y}  \right) - 
  \frac{1}{h_k^2} \left(\frac{\partial u_k}{\partial y}, \frac{\partial v_k }{\partial y} \right)\\
  \nonumber & - \left( \frac{h^{\prime}}{h} \int_0^1 y \frac{\partial u}{\partial y} v_k dy - \frac{h_k^{\prime}}{h_k} \int_0^1 y \frac{\partial u_k}{\partial y} v_k dy  \right) + \frac{h^{\prime}}{h} u(\tau, 1) v_k(1) -  \frac{h_k^{\prime}}{h_k} u_k(\tau, 1) v_k(1) \\
 \label{a23}&- \left( \frac{1}{h} \text{Bi} \left( \frac{b(\tau)}{ m_0} - \text{H} u(\tau, 0)\right) v_k(0) -  \frac{1}{h_k} \text{Bi} \left( \frac{b(\tau)}{ m_0} - \text{H} u_k(\tau, 0)\right) v_k(0) \right) = 0,
  \end{align}
  which holds for all $v_k \in V_k$ and for almost every $\tau \in S_{\hat{T}}$.\\
 Arranging conveniently the terms in \eqref{a23} yields
  \begin{align}
  \nonumber \left( \frac{\partial e}{\partial \tau}, v_k \right)  &+  \frac{1}{h^2} \left( \frac{\partial{e}}{\partial{y}}, \frac{\partial v_k }{\partial y}  \right) - \left(
  \frac{1}{h_k^2} -  \frac{1}{h^2} \right)  \left(\frac{\partial u_k}{\partial y}, \frac{\partial v_k }{\partial y} \right)\\
  \nonumber & - \left( \frac{h^{\prime}}{h} \int_0^1 y \frac{\partial e}{\partial y} v_k dy +  \left( \frac{h^{\prime}}{h}- \frac{h_k^{\prime}}{h_k} \right) \int_0^1 y \frac{\partial u_k}{\partial y} v_k dy  \right) \\
   \nonumber&+ \frac{h^{\prime}}{h} e(\tau, 1) v_k(1) + \left( \frac{h^{\prime}}{h} -  \frac{h_k^{\prime}}{h_k}\right) u_k(\tau, 1) v_k(1) \\
  \label{a24}&- \left( \text{Bi} \frac{b(\tau)}{m_0}\left( \frac{1}{h} - \frac{1}{h_k}\right) v_k(0) - \text{Bi} \;\text{H}    \left(  \frac{u(\tau, 0)}{h}  -  \frac{u_k(\tau, 0)}{h_k} \right) v_k(0) \right) = 0.
  \end{align}
  In \eqref{a24}, we take as test function $v_k: = w_k - u_k \in V_k$  and use the decomposition $v_k = (w_k - u) + e$. Then \eqref{a24} becomes 
  \begin{align}
  \nonumber \left( \frac{\partial e}{\partial \tau}, e \right)  & + \left( \frac{\partial e}{\partial \tau}, w_k - u \right) +  \frac{1}{h^2} \left( \frac{\partial{e}}{\partial{y}}, \frac{\partial e }{\partial y}  \right) +   \frac{1}{h^2} \left( \frac{\partial{e}}{\partial{y}}, \frac{\partial }{\partial y} (w_k - u) \right)\\
   \nonumber  &- \left(
  \frac{1}{h_k^2} -  \frac{1}{h^2} \right)  \left(\frac{\partial u_k}{\partial y}, \frac{\partial  }{\partial y}( w_k - u_k) \right)
  - \left( \frac{h^{\prime}}{h} \int_0^1 y \frac{\partial e}{\partial y} ( w_k - u_k) dy +  \left( \frac{h^{\prime}}{h}- \frac{h_k^{\prime}}{h_k} \right) \int_0^1 y \frac{\partial u_k}{\partial y}( w_k - u_k) dy  \right) \\
  \nonumber&+ \frac{h^{\prime}}{h} e(\tau, 1)( w_k(1) - u_k(1)) + \left( \frac{h^{\prime}}{h} -  \frac{h_k^{\prime}}{h_k}\right) u_k(\tau, 1) (w_k(1) - u_k(1)) \\
 \nonumber &- \left( \text{Bi} \frac{b(\tau)}{m_0}\left( \frac{1}{h} - \frac{1}{h_k}\right) (w_k(0) - u_k(0)) - \text{Bi} \;\text{H}    \left(  \frac{u(\tau, 0)}{h}  -  \frac{u_k(\tau, 0)}{h_k} \right) (w_k(0) - u_k(0)) \right) = 0.
\end{align}
Therefore, we can write
    \begin{align}
  \nonumber  \frac{1}{2} \frac{d}{d \tau}  \lVert e \rVert^2 + \frac{1}{h^2} \left\Vert \frac{\partial e}{\partial y} \right\Vert^2 &\leq \left\Vert \frac{\partial e}{\partial \tau} \right\Vert \left\Vert u - w_k \right\Vert  +  \frac{1}{h^2} \left\Vert \frac{\partial e}{\partial y} \right\Vert \left\Vert\frac{\partial}{\partial y} (u - w_k) \right\Vert \\
  \nonumber &+ |h - h_k| \frac{h + h_k}{ h^2 h_k^2} \left\Vert \frac{\partial u_k}{\partial y} \right\Vert \left\Vert  \frac{\partial}{\partial y}(w_k - u_k)  \right\Vert + \frac{h^{\prime}}{ h} \left\Vert \frac{\partial e}{\partial y}  \right\Vert  \left\Vert w_k - u_k \right\Vert \\
 \nonumber  & + \left|  \frac{h^{\prime}}{h} -   \frac{h^{\prime}_k}{h_k} \right|  \left\Vert \frac{\partial u_k}{\partial y} \right\Vert \left\Vert  \frac{\partial}{\partial y}(w_k - u_k)  \right\Vert + \frac{h^{\prime}}{h} |e(\tau, 1)| |(w_k(1) - u(1) ) + e(\tau, 1)|\\
 \nonumber  & +  \left|  \frac{h^{\prime}}{h} -   \frac{h^{\prime}_k}{h_k} \right| |u_k(\tau, 1)| |(w_k(1) - u(1)) + e(\tau, 1)| \\
 \label{a26} & + \text{Bi} \frac{b^*}{m_0} \left|  \frac{1}{h} -   \frac{1}{h_k} \right| |w_k(0) - u_k(0)| + \text{Bi}\; \text{H} \left|  \frac{u(\tau, 0)}{h}  -  \frac{u_k(\tau, 0)}{h_k}   \right| |w_k(0) - u_k(0)|.
  \end{align}
  To bound some terms on the right hand side in \eqref{a26}, we introduce the strictly positive constant $c_{\ell} < \infty, \ell \in \{1, 2, \cdots, 5\}$.  The value for these constants is not explicitly written, but can be calculated.  Before proceeding further, we collect two useful estimates in Remark \ref{Remark1}. 
  \begin{remark}
  \label{Remark1}
  There exist constants $c_2,\; c_5>0$ such that 
  \begin{align*}
  (1)\;\; & \left|  \frac{h^{\prime}}{h} -   \frac{h^{\prime}_k}{h_k} \right| \leq c_2 (|h - h_k| + |h^{\prime} - h^{\prime}_k|)\\
 (2)\;\; &  \left(  \frac{u(0)}{h}  -  \frac{u_k(0)}{h_k} \right) = \frac{1}{h} (e(0)) + \frac{u_k(0)}{h_k}(h_k - h) \leq c_5(|e(0)| + |h - h_k|).
  \end{align*}
  \end{remark}
 Making use of Remark \ref{Remark1}, \eqref{a26} becomes
   \begin{align}
  \nonumber  \frac{1}{2} \frac{d}{d \tau}  \lVert e \rVert^2 + \frac{1}{h^2} \left\Vert \frac{\partial e}{\partial y} \right\Vert^2 &\leq \left\Vert \frac{\partial e}{\partial \tau} \right\Vert \left\Vert u - w_k \right\Vert  +  \frac{1}{h^2} \left\Vert \frac{\partial e}{\partial y} \right\Vert \left\Vert\frac{\partial}{\partial y} (u - w_k) \right\Vert \\
  \nonumber &+ c_1|h - h_k| \left\Vert \frac{\partial u_k}{\partial y} \right\Vert \left( \left\Vert  \frac{\partial}{\partial y}(w_k - u)  \right\Vert + \left\Vert \frac{\partial e}{\partial y}\right\Vert \right) +\frac{h^{\prime}}{h} \left\Vert \frac{\partial e}{\partial y} \right\Vert  \left( \left\Vert  w_k - u  \right\Vert + \left\Vert e\right\Vert \right) \\
  \nonumber  & +  c_2(|h - h_k| + |h^{\prime} - h^{\prime}_k|) \left\Vert \frac{\partial u_k}{\partial y} \right\Vert  \left( \left\Vert  \frac{\partial}{\partial y}(w_k - u)  \right\Vert + \left\Vert \frac{\partial e}{\partial y}\right\Vert \right)\\
  \nonumber &+ {\clb \frac{h^{\prime}}{h}}|e(1)| (|w_k(1) - u(1)| +|e(1)|) \\
\nonumber &  +  c_3 (|h - h_k| + |h^{\prime} - h^{\prime}_k|) |u_k(\tau, 1)| (|w_k(1) - u(1)| + |e(1)|) \\
 \nonumber & + c_4\text{Bi} \frac{b^*}{m_0} \left| h - h_k\right| (|w_k(0) - u(0)| + |e(0)|)\\
\nonumber  & + c_5 \text{Bi}\; \text{H} (| e(0)| + |h - h_k|) (|w_k(0) - u(0)|+ |e(0)|) = \sum_{\ell  = 1}^{9} I_{\ell}.
  \end{align}
  We set $w_k := I_ku$, where $I_ku$ is the Lagrange interpolation of $u$. By using Lemma \ref{lemma1}, Young's inequality \eqref{a3} and interpolation inequality \eqref{a33}, we obtain the following estimates: 
  \begin{align*}
  I_1 & :=  \left\Vert \frac{\partial e}{\partial \tau} \right\Vert \left\Vert u - w_k \right\Vert \leq  \left\Vert \frac{\partial e}{\partial \tau} \right\Vert \gamma_1 k^2 \| u\|_{H^2(0,1)} \leq  \frac{1}{2}\left\Vert \frac{\partial e}{\partial \tau} \right\Vert^2 k^2 + \frac{\gamma_1^2 k^2}{2} \| u\|^2_{H^2(0,1)},\\
  I_2 & := \frac{1}{h^2} \left\Vert \frac{\partial e}{\partial y} \right\Vert \left\Vert\frac{\partial}{\partial y} (u - w_k) \right\Vert  \leq \frac{1}{h^2} \left\Vert \frac{\partial e}{\partial y} \right\Vert \gamma_2 k\left\Vert u\right\Vert_{H^2(0,1)} \leq \frac{\xi}{h^2} \left\Vert \frac{\partial e}{\partial y} \right\Vert^2 + c_{\xi} \gamma_2^2 k^2 \frac{1}{h^2} \| u\| ^2_{H^2(0,1)}, \\
 \nonumber I_3 &:= c_1|h - h_k| \left\Vert \frac{\partial u_k}{\partial y} \right\Vert \left( \left\Vert  \frac{\partial}{\partial y}(w_k - u)  \right\Vert + \left\Vert \frac{\partial e}{\partial y}\right\Vert \right)\\
 \nonumber & \leq  c_1|h - h_k| \left\Vert \frac{\partial u_k}{\partial y} \right\Vert \left( \gamma_2 k \left\Vert  u  \right\Vert_{H^2(0, 1)} + \left\Vert \frac{\partial e}{\partial y}\right\Vert \right)\\
  &  \leq \rho |h - h_k|^2 {\clb \left\Vert \frac{\partial u_k}{\partial y}\right\Vert^2} + c_{\rho} c_1^2 \gamma_2^2 k^2  \left\Vert  u  \right\Vert^2_{H^2(0, 1)} + c_{\hat{\rho}} c_1^2 |h - h_k|^2  { \clb \left\Vert \frac{\partial u_k}{\partial y}\right\Vert^2} h^2 + \hat{\rho}  \frac{1}{h^2} \left\Vert \frac{\partial e}{\partial y}\right\Vert^2. 
  \end{align*}
  {\clb We observe that if $u_0 \in H^1(0, 1)$, then it also holds that $u_k \in H^1(S_{\hat{T}}, V_k)$. Hence, we can control terms like $\left\Vert \frac{\partial u_k}{\partial y}\right\Vert^2$ via
  \begin{align}
  \label{lifting}    \max_{0\leq \tau \leq \hat{T}} \| u_k\|^2_{H^1(0, 1)} + \int_0^{\hat{T}} \| u_k\|^2_{H^2(0, 1)} d\tau \leq \hat{c}_1.
  \end{align}
  }
  
  Therefore, we get 
  \begin{align*}
I_3 & \leq  \rho \hat{c}_1 |h - h_k|^2  + c_{\rho} c_1^2 \gamma_2^2 k^2  \left\Vert  u  \right\Vert^2_{H^2(0, 1)} +  c_{\hat{\rho}}  \hat{c}_1 c_1^2 |h - h_k|^2  h^2 + \hat{\rho}  \frac{1}{h^2} \left\Vert \frac{\partial e}{\partial y}\right\Vert^2,  \\
 \nonumber I_4 &:= \frac{h^{\prime}}{h} \left\Vert \frac{\partial e}{\partial y} \right\Vert  \left( \left\Vert  w_k - u  \right\Vert + \left\Vert e\right\Vert \right)\\
 \nonumber& \leq \frac{h^{\prime}}{h} \left\Vert \frac{\partial e}{\partial y} \right\Vert  \left( \gamma_1 k^2 \left\Vert u  \right\Vert_{H^2(0,1)} + \left\Vert e\right\Vert \right) \\
\nonumber &\leq\zeta \frac{1}{h^2} \left\Vert \frac{\partial e}{\partial y} \right\Vert ^2 + c_{\zeta} (h^{\prime})^2 \left( \gamma_1 k^2 \left\Vert u  \right\Vert_{H^2(0,1)} + \left\Vert e\right\Vert \right)^2  \\
 & \leq\zeta \frac{1}{h^2} \left\Vert \frac{\partial e}{\partial y} \right\Vert ^2 +  2c_{\zeta} (h^{\prime})^2 \left( \gamma_1^2 k^4 \left\Vert u  \right\Vert^2_{H^2(0,1)} + \left\Vert e\right\Vert^2 \right), \\
 \nonumber I_5& :=  c_2(|h - h_k| + |h^{\prime} - h^{\prime}_k|) \left\Vert \frac{\partial u_k}{\partial y} \right\Vert  \left( \left\Vert  \frac{\partial}{\partial y}(w_k - u)  \right\Vert + \left\Vert \frac{\partial e}{\partial y}\right\Vert \right)\\
\nonumber &\leq c_2(|h - h_k| + |h^{\prime} - h^{\prime}_k|) { \clb \left\Vert \frac{\partial u_k}{\partial y}\right\Vert}\left( \gamma_2 k \left\Vert u  \right\Vert_{H^2(0,1)} + \left\Vert \frac{\partial e}{\partial y}\right\Vert \right)\\
 & \leq \xi \left(|h - h_k|^2 { \clb \left\Vert \frac{\partial u_k}{\partial y}\right\Vert^2} + |h^{\prime} - h^{\prime}_k|^2 { \clb \left\Vert \frac{\partial u_k}{\partial y}\right\Vert^2}\right) + c_{\xi} c_2^2 \gamma_2^2 k^2 \left\Vert u  \right\Vert^2_{H^2(0,1)}\\
 &+ \hat{\xi} \frac{1}{h^2}\left\Vert \frac{\partial e}{\partial y}\right\Vert^2 + c_{\hat{\xi}} c_2^2  h^2\left(|h - h_k|^2 { \clb \left\Vert \frac{\partial u_k}{\partial y}\right\Vert^2} + |h^{\prime} - h^{\prime}_k|^2 { \clb \left\Vert \frac{\partial u_k}{\partial y}\right\Vert^2}\right),\\
 & \leq \xi \hat{c}_1 \left(|h - h_k|^2 + |h^{\prime} - h^{\prime}_k|^2 \right) + c_{\xi} c_2^2 \gamma_2^2 k^2 \left\Vert u  \right\Vert^2_{H^2(0,1)}\\
 &+ \hat{\xi} \frac{1}{h^2}\left\Vert \frac{\partial e}{\partial y}\right\Vert^2 + c_{\hat{\xi}} \hat{c}_1 c_2^2  h^2\left(|h - h_k|^2  + |h^{\prime} - h^{\prime}_k|^2 \right),\\
\nonumber I_6 &:= \frac{h^{\prime}}{h} |e(1)| (|w_k(1) - u(1)| + |e( 1)|)\\
\nonumber & =  \frac{h^{\prime}}{h} |e(1)|^2 +   \frac{h^{\prime}}{h} |e(1)| |w_k(1) - u(1)|\\
\nonumber & =  \frac{h^{\prime}}{h} |e(1)|^2  + \frac{h^{\prime}}{h}  \left( \frac{ |e(1)|^2}{2} + \frac{|w_k(1) - u(1)|^2}{2}\right)\\
\nonumber & =  \frac{3}{2}\frac{h^{\prime}}{h} |e(1)|^2  + \frac{h^{\prime}}{h}  \frac{|w_k(1) - u(1)|^2}{2}\\
 \nonumber &\leq {\clb \frac{3}{2} \frac{h^{\prime}}{h} \hat{c} \left\Vert e \right\Vert ^{2 \theta} \left \Vert  e \right \Vert_{H^1(0,1)}^{2(1-\theta)} +  \frac{h^{\prime}}{2h}  \left(\gamma_1 k^2 + \gamma_3 k^{1+\theta} \right)^2 \left\Vert  u\right\Vert_{H^2(0,1)}^{2} }  \\
& \leq {\clb \frac{3}{2} \left( \frac{\xi}{h^2} \left\Vert \frac{\partial e}{\partial y}\right\Vert^2 + (\frac{\xi}{h^2}+ c_{\xi} \hat{c}^2 (h^{\prime})^2) \left\Vert e\right\Vert^2   \right) + \frac{h^{\prime}}{2h}  \hat{c}^2 \left(  \gamma_1 k^2 + \gamma_3 k^{1+\theta} \right)^2 \left\Vert  u\right\Vert_{H^2(0,1)}^{2},}\\
I_7 \nonumber& :=  c_3 (|h - h_k| + |h^{\prime} - h^{\prime}_k|) |u_k(\tau, 1)| (|w_k(1) - u(1)| + |e(1)|) \\
\nonumber&\leq  c_3 (|h - h_k| + |h^{\prime} - h^{\prime}_k|) \hat{c} \left\Vert u_k \right\Vert ^{1-\theta} \left\Vert \frac{ \partial u_k}{\partial y} \right\Vert ^{\theta} (|w_k(1) - u(1)| + |e(1)|) \\
\nonumber&  \leq {\clb c_3 (|h - h_k| + |h^{\prime} - h^{\prime}_k|) \hat{c}^2 \left\Vert u_k \right\Vert_{H^1(0,1)} \left(  \gamma_1 k^2 + \gamma_3 k^{1+\theta} \right) \|u\|_{H^2(0,1)} }\\
  \nonumber & {\clb +  c_3 (|h - h_k| + |h^{\prime} - h^{\prime}_k|) \hat{c}^2 \left\Vert u_k \right\Vert_{H^1(0,1)}  \| e\| ^{\theta} \left\Vert e \right\Vert_{H^1(0, 1)} ^{1-\theta}}  \\
\nonumber& \leq {\clb c_3 \hat{c}^2 (|h - h_k| + |h^{\prime} - h^{\prime}_k|) \left\Vert u_k \right\Vert_{H^1(0, 1)}  \left(  \gamma_1 k^2 + \gamma_3 k^{1+\theta} \right) \| u\|_{H^2(0,1)}} \\
\nonumber &  {\clb + c_3 \hat{c}^2 (|h - h_k| + |h^{\prime} - h^{\prime}_k|) \left\Vert u_k \right\Vert_{H^1(0,1)} \| e\| ^{\theta} \left\Vert e \right\Vert_{H^1(0, 1)} ^{1-\theta}} \\
\nonumber&  \leq {\clb \xi \left( |h - h_k| + |h^{\prime} - h_k^{\prime}| \right)^2 \left\Vert u_k \right\Vert_{H^1(0,1)}^2 + c_{\xi} \left(c_3 \hat{c}^2(  \gamma_1 k^2 + \gamma_3 k^{1+\theta} )\right)^2 \| u\|_{H^2(0,1)}^2}\\
\nonumber& {\clb + \bar{\xi} \left( |h - h_k| + |h^{\prime} - h_k^{\prime}| \right)^2 \left\Vert u_k\right\Vert_{H^1(0,1)}^2 + c_{\bar{\xi}} c_3^2 \hat{c}^4 \| e\| ^{2\theta} \left\Vert e \right\Vert_{H^1(0, 1)} ^{2(1-\theta)}}\\
\nonumber&\leq {\clb 2\xi \hat{c}_1 \left( |h - h_k| ^2+ |h^{\prime} - h_k^{\prime}|^2 \right) + c_{\xi} \left(c_3 \hat{c}^2(  \gamma_1 k^2 + \gamma_3 k^{1+\theta} )\right)^2 \| u\|_{H^2(0,1)}^2}\\
&{\clb + 2\bar{\xi} \hat{c}_1 \left( |h - h_k|^2 + |h^{\prime} - h_k^{\prime}|^2 \right)  + \hat{\xi} \frac{1}{h^2}  \left\Vert \frac{\partial e}{\partial y} \right\Vert^2 + \left( \frac{\hat{\xi}}{h^2} + c_{\hat{\xi}} c_{\bar{\xi}}^2  c_3^4 \hat{c}^8 h^2 \right)\| e\|^{2},}\\
\nonumber I_8& :=  c_4\text{Bi} \frac{b^*}{m_0} \left| h - h_k\right| (|w_k(0) - u(0)| + |e(0)|)\\
\nonumber& \leq {\clb c_4 \text{Bi} \frac{b^*}{m_0} \left| h - h_k\right| \left( \hat{c} (\gamma_1 k^2 + \gamma_3 k^{1+\theta}) \left\Vert u \right\Vert_{H^2(0, 1)}  + \hat{c}\left\Vert e  \right\Vert^{\theta} \left\Vert e\right\Vert ^{1 -\theta}_{H^1(0, 1)} \right)}\\
\nonumber&\leq {\clb \xi |h - h_k|^2 + c_{\xi} c_4^2 \hat{c}^2 \text{Bi}^2\frac{(b^*)^2}{m_0^2} (\gamma_1 k^2 + \gamma_3 k^{1+\theta})^2  \| u\|_{H^2(0,1)}^2} \\ 
\nonumber&  {\clb +  \hat{\xi} |h - h_k|^2 + c_{\hat{\xi}} c_4^2 \hat{c}^2 \text{Bi}^2\frac{(b^*)^2}{m_0^2} \left\Vert e  \right\Vert^{2\theta} \left\Vert e\right\Vert ^{2(1 -\theta)}_{H^1(0, 1)}} \\
\nonumber& \leq {\clb \xi |h - h_k|^2 + c_{\xi} c_4^2 \hat{c}^2 \text{Bi}^2\frac{(b^*)^2}{m_0^2} (\gamma_1 k^2 + \gamma_3 k^{1+\theta})^2 \| u\|_{H^2(0,1)}^2} \\ 
& {\clb +  \hat{\xi} |h - h_k|^2 + \bar{\xi}  \frac{1}{h^2} \left\Vert \frac{\partial e}{\partial y}  \right\Vert^{2} +  \left( \frac{\bar{\xi}}{h^2}+ c_{\bar{\xi}}c_{\hat{\xi}}^2 c_4^4 \hat{c}^4 \text{Bi}^4\frac{(b^*)^4}{m_0^4} h^2 \right) \left\Vert e\right\Vert^{2}. }
\end{align*}
By a similar calculation used to obtain the upper bounds on $I_6$ and $I_8$, we get
\begin{align*}
\nonumber I_9&:= c_5 \text{Bi}\; \text{H} (| e(0)| + |h - h_k|) (|w_k(0) - u(0)|+ |e(0)|)\\
\nonumber & \leq {\clb \frac{3}{2} \left( \frac{\xi}{h^2} \left\Vert \frac{\partial e}{\partial y}\right\Vert^2 + \left( \frac{\xi}{h^2} + c_{\xi} \hat{c}^2 c_5^2 \text{Bi}^2 \text{H} ^2 h^2\right) \left\Vert e\right\Vert^2 \right ) + c_{\xi} c_5^2 \text{Bi}^2 \text{H}^2 \hat{c}^2 (\gamma_1 k^2 + \gamma_3 k^{1+\theta})^2 \left\Vert  u\right\Vert_{H^2(0,1)} ^{2} }\\
& {\clb + (\xi +\hat{\xi}) |h - h_k|^2 + \bar{\xi}  \frac{1}{h^2} \left\Vert \frac{\partial e}{\partial y}  \right\Vert^{2} +  \left(\frac{\bar{\xi}}{h^2} + c_{\bar{\xi}} c_{\hat{\xi}}^2 c_5^4 \hat{c}^4 \text{Bi}^4 \text{H}^4 h^2 \right) \left\Vert e\right\Vert ^{2}.} 
  \end{align*} 
 Finally, we are led to the following structural inequality:
  \begin{align}
 \label{a29} \frac{1}{2} \frac{d}{d \tau}  \lVert e \rVert^2 + A_1 \left\Vert \frac{\partial e}{\partial y} \right\Vert^2 &\leq A_2k^2 + A_3 \left\Vert e \right\Vert^2 + A_4 |h - h_k|^2 
   + A_5 |h^{\prime} - h_k^{\prime}|^2,
 \end{align}
 where
 {\clb \begin{align*}
 A_1 &:= \frac{1}{L^2} \left( 1 - \frac{5}{2}\xi - \hat{\rho}   - \zeta - 2 \hat{\xi}  - 2 \bar{\xi}\right),\\
\nonumber A_2 & := \| u\|_{H^2(0, 1)} \left( \frac{\gamma_1^2}{2} + \frac{1}{h_0^2} c_{\xi} \gamma_2^2 + c_{\rho} c_1^2 \gamma_2^2 + 2 c_{\zeta} \| h^{\prime}\|_{\infty}^2 \gamma_1^2 + c_{\xi} c_2^2 \gamma_2^2 + \frac{\| h^{\prime}\|_{\infty}}{2h_0} \hat{c}^2 (\gamma_1 + \gamma_3)^2  \right.\\
& \left. + c_{\xi} c_3^2 \hat{c}^4 (\gamma_1+ \gamma_3)^2 + c_{\xi} c_4^2 \hat{c}^2 \text{Bi}^2 \frac{(b^*)^2}{m_0^2} (\gamma_1+\gamma_3)^2 + c_{\xi} c_5^2 \hat{c}^2 \text{Bi}^2\; \text{H}^2 (\gamma_1+\gamma_3)^2 \right), \\
 A_3 &:= 2 c_{\zeta} \| h^{\prime}\|_{\infty}^2 + \frac{1}{h_0^2} \left( 3\xi + \hat{\xi} + 2 \bar{\xi} \right)+ \frac{3}{2} c_{\xi} \hat{c}^2 \| h^{\prime}\|^2_{\infty} + c_{\hat{\xi}}  c_{\bar{\xi}}^2 c_3^4 \hat{c}^8 \| h \|^2_{\infty} + c_{\bar{\xi}} c_{\hat{\xi}}^2 \hat{c}^4   \text{Bi}^4 \left( c_4^4 \frac{(b^*)^4}{m_0^4} + c_5^4 \rm{H}^4 \right) \| h\|^2_{\infty}\\
 &+ c_{\xi} \hat{c}^2 c_5^2 \text{Bi}^2\; \text{H}^2  \| h\|^2_{\infty},\\
 A_4 & :=2(\xi + \hat{\xi}) + \hat{c}_1(\rho + \xi)  + 2 \hat{c}_1 (\bar{\xi}+ \xi)  + c_{\hat{\rho}} \hat{c}_1 c_1^2 \| h\|_{\infty}^2  + c_2^2 \hat{c}_1 c_{\hat{\xi}} \|h \|_{\infty}^2,\\
 A_5 &:=  3 \xi \hat{c}_1  + 2\bar{\xi} \hat{c}_1 +  c_2^2 \hat{c}_1 c_{\hat{\xi}} \|h \|_{\infty}^2.
 \end{align*} 
  From \eqref{a18} and \eqref{a20}, we get for all $\tau \in(0, \hat{T})$ the inequality
 \begin{align*}
 | h^{\prime}(\tau) - h^{\prime}_k(\tau) | &\leq A_0 |e(1)| + \frac{1}{m_0} |\sigma(h(\tau)) - \sigma(h_k(\tau))|\\
 & \leq A_0 \hat{c} \left( \eta \left\Vert e(\tau) \right\Vert_{H^1(0, 1)} + c_{\eta} \| e(\tau)\| \right)+ \frac{\mathcal{L}}{m_0} |h(\tau)- h_k(\tau)|.
 \end{align*}
 Thus, this leads to 
 \begin{align}
 \label{a30} | h^{\prime} - h^{\prime}_k |^2 \leq 3\left(A_0^2\hat{c}^2 \eta^2 \left\Vert \frac{\partial e}{\partial y} \right\Vert^2 + A_0^2 \hat{c}^2 (\eta^2 + c_\eta^2) \| e\|^2 + \frac{\mathcal{L}^2}{m_0^2} |h- h_k|^2 \right).
 \end{align}
 Using \eqref{a30} in \eqref{a29}, we infer that
 \begin{align}
 \label{gr} \frac{d}{d \tau}  \lVert e \rVert^2 + (A_1- 3A_0^2 \hat{c}^2 \eta^2 A_5) \left\Vert \frac{\partial e}{\partial y} \right\Vert^2 \leq A_2k^2 &+ A_6 \left\Vert e \right\Vert^2 + \left(A_4+ 3A_5 \frac{\mathcal{L}^2}{m_0^2}\right) |h - h_k|^2,
 \end{align}
 where $A_6:= A_3 + 3A_0^2 \hat{c}^2 (\eta^2 + c_{\eta}^2) A_5$.
 We choose $\xi>0,\; \hat{\rho}>0,\; \bar{\xi}>0,\; \zeta>0$,\; $\hat{\xi}>0$, and $\eta>0$ sufficiently small  such that $\zeta_1:=A_1- 3A_0^2 \hat{c}^2 \eta^2 A_5 > 0$. Applying Gronwall's inequality (see e.g. Appendix B in \cite{evans1998partial}) gives the following  upper bounds:
 \begin{align}
\nonumber \| e(\tau)\| ^2 & \leq e^{\displaystyle \int_0^\tau A_6  ds}\left( \| e(0) \|^2 +  \int_0^\tau \left(A_2k^2 + \left(A_4+ 3A_5 \frac{\mathcal{L}^2}{m_0^2}\right)|h(s) - h_k(s)|^2\right) ds\right)\\
\nonumber &  \leq c_6(A_0, A_3, A_5,  \hat{T}) \left( k^4 \| u_0\|^2_{H^2(0,1)} + A_2 k^2 \tau + \left(A_4+ 3A_5 \frac{\mathcal{L}^2}{m_0^2}\right) \int_0^\tau  |h(s) - h_k(s)|^2 ds\right)\\
\nonumber  & \leq c_6(A_0, A_3, A_4, A_5, \mathcal{L}, \hat{T}) \left( k^4+ k^2 \hat{T}  +  \| h - h_k\|^2_{L^2(S_{\hat{T}})}\right).
 \end{align}
 Thus, we obtain \begin{align}
  \nonumber \displaystyle\max_{0\leq \tau \leq \hat{T}} \| e(\tau)\|^2 \leq c_6\left ( k^2  + \| h - h_k\|^2_{L^2(S_{\hat{T}})}\right).
 \end{align}
 By using  Young's inequality together with \eqref{a30}, we get the following relations:
 \begin{align}
 \nonumber\frac{d}{d\tau} (|h - h_k|^2) &= 2 (h - h_k) (h^{\prime} - h_k^{\prime})\\
 \nonumber& \leq |h-h_k|^2 + | h^{\prime} - h_k^{\prime}|^2\\
\label{a} & \leq C |h-h_k|^2 + 3A_0^2\hat{c}^2 \eta^2 \left\Vert \frac{\partial e}{\partial y} \right\Vert^2 + 3A_0^2 \hat{c}^2 (\eta^2 + c_\eta^2) \| e\|^2,
 \end{align}
 where $C := 1+ 3 \mathcal{L}^2/m_0^2.$\\
  Let $\delta>0$ be any positive real number. Adding $\delta \frac{d}{d\tau} |h - h_k|^2$ on both sides of \eqref{gr} and using \eqref{a} yields
 \begin{align}
 \nonumber \frac{d}{d \tau} \left( \lVert e \rVert^2  + \delta | h-h_k|^2\right)+ (\zeta_1 - 3\delta \hat{c}^2 A_0^2\eta^2) \left\Vert \frac{\partial e}{\partial y} \right\Vert^2 \leq A_2 k^2 & + (A_6 + 3 \delta A_0^2 \hat{c}^2 (\eta^2 + c_{\eta}^2)) \|e\|^2 \\
 &+ \left(A_4 + 3 A_5 \frac{\mathcal{L}^2}{m_0^2} + \delta C\right) | h-h_k|^2. 
 \end{align}
 We choose  $\eta>0$ in such a way that $(\zeta_1 - 3\delta \hat{c}^2 A_0^2\eta^2)>0$. Then it exists a constant $A_7>0$ such that\\
  \begin{align}
 \label{Gronwall}  \frac{d}{d \tau} \left( \lVert e \rVert^2  + \delta | h-h_k|^2\right) \leq A_2k^2 +  A_7( \|e\|^2 + \delta | h-h_k|^2).
 \end{align}}
 Gronwall's inequality applied to \eqref{Gronwall} for the quantity $ \|e\|^2 + \delta | h-h_k|^2$ gives the estimate
 \begin{align}
     \label{p1}
     \|e\|^2 + \delta | h-h_k|^2 \leq c k^2.
 \end{align}
  Integrating \eqref{gr} from $0$ to $\hat{T}$ and using \eqref{p1}  yields
 \begin{align}
 \label{grad}\int_0^{\hat{T}} \left\Vert \frac{\partial e}{\partial y} \right\Vert^2  d\tau  \leq c_7 k^2.
 \end{align}
 Integrating \eqref{a30} from $0$ to $\hat{T}$ and using \eqref{p1} and \eqref{grad} gives the estimate
 \begin{align*}
 \|h^{\prime} - h_k^{\prime}\|^2 \leq ck^2,
 \end{align*}
 which completes the proof of Theorem \ref{apriori}. 
 \end{proof}
\begin{theorem}
\label{post}
	(A posteriori error estimate) Assume {\rm (A\ref{A1})--(A\ref{A5})} hold. Additionally, {\clb take} $u_0 \in H^2(0, 1)$. Let $(u, h)$ and $(u_k, h_k)$ be the corresponding weak solutions to the problem $(P)$ and $(P_d)$ in the sense of Definition \ref{D1} and Definition \ref{D2}, respectively. Then there exist $0<\tilde{T}\leq \hat{T}$ and  positive constants $c_1,\; c_2,\, c_3$ (independent of $k$ and $u$)  such that for all $\tau \in S_{\tilde{T}}:= (0,\tilde{T})$ the following inequality holds:
	\begin{align}
\nonumber	&\| u - u_k \|_{L^2(0, 1)} + c_1|h-h_k|^2 + c_2 \int_0^{\tau} \left\Vert\frac{\partial }{\partial x}(u- u_k) \right\Vert^2 ds\\
\label{aposteriori}	& \leq  c_3\left( | h(0) - h_k(0)|^2 +  \sum_{i = 0}^{N-2} k_i^2\left\{\| R(u_k)\|_{L^2(S_{\tilde{T}}, L^2(I_i))}^2 + k_i^2 \| u_0\|_{H^2(I_i)}^2\right\}\right),
	\end{align}
	where the residual $R(u_k)$ is defined by
	\begin{align}
\label{residual}	R(u_k) := \frac{h^{\prime}_k}{h_k}  y \frac{\partial u_k}{\partial y} +  \frac{1}{h_k} {\rm Bi} \left( \frac{b(\tau)}{ m_0}  - {\rm H} u_k(\tau, 0)\right) - \frac{h^{\prime}_k}{h_k} u_k(\tau, 1) - \frac{\partial u_k}{\partial \tau}.
	\end{align}
\end{theorem}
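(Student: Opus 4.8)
The plan is to reuse the energy framework of Theorem \ref{apriori}, but to keep the \emph{computable} residual $R(u_k)$ in place of the interpolation errors of the unknown exact solution. I would start from the error equation obtained by subtracting the discrete weak form \eqref{a19} from the continuous one \eqref{a17}. Since these relations use the different test spaces $H^1(0,1)$ and $V_k$, I would test the continuous identity with $\varphi = e := u - u_k \in H^1(0,1)$ and the discrete identity with $\varphi_k = I_k e \in V_k$. The decisive elementary observation is that $I_k u_k = u_k$, whence $e - I_k e = (u-u_k) - I_k(u-u_k) = u - I_k u$. This single identity does two things at once: it lets me invoke the discrete Galerkin orthogonality (the weak residual functional $\mathcal{R}(u_k)$ given by the left-hand side of \eqref{a19} vanishes on $V_k$), and it makes the leftover test direction a genuine interpolation error that Lemma \ref{lemma1} controls in terms of data.

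Next I would extract the residual. Splitting the discrete operator applied to $e$ into its action on $I_k e$ (which drops by orthogonality) plus its action on $u - I_k u$, I observe that the diffusion contribution $\tfrac{1}{h_k^2}\bigl(\partial_y u_k,\partial_y(u-I_k u)\bigr)$ vanishes identically, because $\partial_y u_k$ is constant on each $I_i$ while $u-I_k u$ vanishes at every node; likewise the trace evaluations $(u-I_k u)(0)$ and $(u-I_k u)(1)$ vanish. The surviving volume terms then assemble, paired with $u - I_k u$, into the residual $R(u_k)$ of \eqref{residual} (which carries the natural boundary fluxes as the constant-in-$y$ terms), up to coefficient-mismatch contributions stemming from $\tfrac{1}{h^2}-\tfrac{1}{h_k^2}$ and $\tfrac{h'}{h}-\tfrac{h_k'}{h_k}$; these last I bound by $|h-h_k|$ and $|h'-h_k'|$ using estimates of the type in Remark \ref{Remark1}. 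The outcome is a differential inequality of the shape $\tfrac12\frac{d}{d\tau}\|e\|^2 + \tfrac{1}{h^2}\|\partial_y e\|^2 \le (\text{residual term}) + (\text{trace terms}) + (\text{coefficient-difference terms})$.

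I would then estimate each group so that the right-hand side becomes genuinely a posteriori. The residual term is split element-wise by Cauchy--Schwarz, $\int_{I_i} R(u_k)(u-I_k u) \le \|R(u_k)\|_{L^2(I_i)}\,\gamma_1 k_i^2\|u\|_{H^2(I_i)}$, and Young's inequality with an element-dependent weight of order $k_i^{-2}$ produces the computable contribution $k_i^2\|R(u_k)\|^2_{L^2(I_i)}$, the remaining interpolation factor being re-expressed through the data via the regularity and data-dependence of the continuous solution (Definition \ref{D1} and \cite{kumazaki2020global}). The trace terms $|e(0)|^2,\,|e(1)|^2$ are handled by the interpolation--trace inequality \eqref{a33} with $\theta=1/2$, which gives $|e(\cdot)|^2 \le \xi\|\partial_y e\|^2 + C\|e\|^2$ and lets me absorb the gradient into the left-hand side for $\xi$ small. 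For the free boundary I differentiate \eqref{a18}--\eqref{a20} to obtain $|h'-h_k'|^2 \lesssim |e(1)|^2 + |h-h_k|^2$ (again \eqref{a33} and the Lipschitz bound from (A\ref{A4})), close the $h$-component by adding $\delta\frac{d}{d\tau}|h-h_k|^2$ as in the a priori proof, and apply Gronwall's inequality to $\|e\|^2 + \delta|h-h_k|^2$. The initial interpolation error $\|e(0)\|^2_{L^2(I_i)} \le \gamma_1^2 k_i^4\|u_0\|^2_{H^2(I_i)}$ then supplies the $k_i^4\|u_0\|^2_{H^2(I_i)}$ term and $|h(0)-h_k(0)|^2$ the initial free-boundary term, with $\tilde T\le\hat T$ chosen so that all constants retain their sign.

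The main obstacle I anticipate is the combination of the interface/Robin trace terms with the nonlinear $u$--$h$ coupling through $1/h_k^2$ and $h_k'/h_k$: the boundary evaluations at $y=1$ and $y=0$ are only controllable through the interpolation--trace inequality, so the absorption of $\|\partial_y e\|^2$ into $\tfrac{1}{h^2}\|\partial_y e\|^2$ and the closure of the coupled Gronwall argument succeed only after the many small parameters $\xi,\hat\xi,\bar\xi,\zeta,\eta,\delta$ are chosen compatibly and the horizon is shrunk to $\tilde T$. A secondary but conceptually essential point is keeping the bound truly a posteriori, i.e.\ expressed through $R(u_k)$ and the data $u_0,\,h(0)$ rather than through $\|u(\tau)\|_{H^2}$; this is exactly what the identity $e - I_k e = u - I_k u$ together with the data-conversion step above is designed to secure.
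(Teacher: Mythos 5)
Your overall architecture matches the paper's proof: you derive the error identity from \eqref{a17} and \eqref{a19}, test with $v=e$ and $v_k=I_ke$, exploit that $(e-I_ke)$ vanishes at all nodes so the elementwise integration by parts of the diffusion term (with $\partial^2 u_k/\partial y^2=0$ on each $I_i$) leaves only the residual \eqref{residual} paired with $e-I_ke$, handle the trace and coefficient-mismatch terms exactly as in the \textit{a priori} proof via \eqref{a33} and Remark \ref{Remark1}, couple in the free boundary through $|h'-h_k'|^2\lesssim |e(1)|^2+|h-h_k|^2$, add $\delta\frac{d}{d\tau}|h-h_k|^2$, and close with Gronwall, with the initial terms $|h(0)-h_k(0)|^2$ and $k_i^4\|u_0\|^2_{H^2(I_i)}$ entering as in \eqref{aposteriori}. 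Up to this point you have reconstructed the paper's argument.

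However, there is a genuine gap in your treatment of the residual term, which is the one step where the \textit{a posteriori} character is won or lost. You bound
\begin{align*}
\int_{I_i} R(u_k)\,(e-I_ke)\,dy \;\leq\; \|R(u_k)\|_{L^2(I_i)}\,\gamma_1 k_i^2\,\|u\|_{H^2(I_i)},
\end{align*}
using the identity $e-I_ke=u-I_ku$ and the second-order interpolation estimate of Lemma \ref{lemma1}(\ref{l1}). After Young's inequality this leaves a term of the form $k_i^2\|u(\tau)\|^2_{H^2(I_i)}$ which can neither be absorbed into the elliptic term on the left nor matched to any term on the right-hand side of \eqref{aposteriori}: the only $H^2$-quantity allowed there is $k_i^4\|u_0\|^2_{H^2(I_i)}$ coming from the initial interpolation, and the constant $c_3$ must be independent of $u$. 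Your proposed ``data-conversion'' of $\|u\|_{H^2}$ via the well-posedness theory would at best replace it by an \emph{a priori} $O(k^2)$ term with a $u$-dependent (non-computable) constant, which both contradicts the statement and defeats the purpose of an a posteriori bound — the estimator would then be dominated by a term insensitive to the computed residual. The paper avoids this by \emph{not} switching from $e$ to $u$: it uses the first-order stability estimate $\|e-I_ke\|_{L^2(I_i)}\leq \tilde{c}\,k_i\|\partial e/\partial y\|_{L^2(I_i)}$, which requires only $e\in H^1$, sums by Cauchy--Schwarz to get $\tilde{c}\bigl(\sum_i k_i^2\|R(u_k)\|^2_{L^2(I_i)}\bigr)^{1/2}\|\partial e/\partial y\|_{L^2(0,1)}$, and then applies Young's inequality so that the gradient factor is absorbed into $\frac{1}{h^2}\|\partial e/\partial y\|^2$ on the left while the fully computable contribution $c_\xi \tilde{c}^2 h^2\sum_i k_i^2\|R(u_k)\|^2_{L^2(I_i)}$ survives on the right, as in \eqref{i5}. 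With this replacement your argument closes; as written, it does not prove \eqref{aposteriori}.
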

\begin{proof}
Let $e:= u - u_k$ be the pointwise error. Using the weak formulation \eqref{a17}, we can write
\begin{align}
\nonumber\left( \frac{\partial e}{\partial \tau}, v \right)  + \frac{1}{h^2} \left( \frac{\partial e}{\partial y}, \frac{\partial v}{\partial y} \right) =&  \left[\left( \frac{\partial u}{\partial \tau}, v \right)  + \frac{1}{h^2} \left( \frac{\partial u}{\partial y}, \frac{\partial v}{\partial y} \right)\right] - \left[\left( \frac{\partial u_k}{\partial \tau}, v \right)  + \frac{1}{h^2} \left( \frac{\partial u_k}{\partial y}, \frac{\partial v}{\partial y} \right) \right]\\
\nonumber = & \frac{h^{\prime}}{h} \int_0^1 y \frac{\partial u}{\partial y} v dy +  \frac{1}{h}\text{Bi} \left( \frac{b(\tau)}{ m_0}  - \text{H} u(\tau, 0)\right)v(0) - \frac{h^{\prime}}{h} u(\tau, 1) v(1)\\
\label{aa10} &- \left[ \left( \frac{\partial u_k}{\partial \tau}, v \right)  + \frac{1}{h_k^2} \left( \frac{\partial u_k}{\partial y}, \frac{\partial v}{\partial y} \right) + \left(\frac{1}{h^2}- \frac{1}{h_k^2} \right)  \left( \frac{\partial u_k}{\partial y}, \frac{\partial v}{\partial y} \right) \right]
\end{align}
for all $v \in H^1(0, 1)$. Inserting \eqref{residual} into \eqref{aa10} yields  
\begin{align}
\nonumber \left( \frac{\partial e}{\partial \tau}, v \right)  + \frac{1}{h^2} \left( \frac{\partial e}{\partial y}, \frac{\partial v}{\partial y} \right) =&\frac{h^{\prime}}{h} \int_0^1 y \frac{\partial u}{\partial y} v dy +  \frac{1}{h} \text{Bi}\left( \frac{b(\tau)}{ m_0}  - \text{H} u(\tau, 0)\right)v(0) - \frac{h^{\prime}}{h} u(\tau, 1) v(1)\\
\nonumber& - \left(\frac{1}{h^2}- \frac{1}{h_k^2} \right)  \left( \frac{\partial u_k}{\partial y}, \frac{\partial v}{\partial y} \right) -  \frac{h^{\prime}_k}{h_k} \int_0^1 y \frac{\partial u_k}{\partial y} v dy  -  \frac{1}{h_k}\text{Bi} \left( \frac{b(\tau)}{ m_0}  - \text{H} u_k(\tau, 0)\right)v(0)\\
\label{aa0}& + \frac{h^{\prime}_k}{h_k} u_k(\tau, 1) v(1) + \left[ \int_0^1 R(u_k) v dy - \frac{1}{h_k^2} \left( \frac{\partial u_k}{\partial y},  \frac{\partial v}{\partial y}\right)\right],
\end{align} 
where $R(u_k)$
is the residual quantity defined in \eqref{residual}. Since $u_k\in V_k $, we have that $\displaystyle \frac{\partial^2 u_k}{\partial y^2} = 0$ on each $I_i := (y_i, y_{i+1})$.  The term 
\begin{align*}  \int_0^1 R(u_k) v dy - \frac{1}{h_k^2} \left( \frac{\partial u_k}{\partial y},  \frac{\partial v}{\partial y}\right) \end{align*}
becomes after integration by part
\begin{align}
\nonumber\sum_{i = 0}^{N-2}\left\{ \int_{y_i}^{y_{i+1}} R(u_k)vdy  - \frac{1}{h_k^2} \left(\frac{\partial u_k}{\partial y}(y_{i+1}) v(y_{i+1}) - \frac{\partial u_k}{\partial y}(y_{i}) v(y_{i})\right)\right\}.
\end{align}
We also get from \eqref{a19}  
\begin{align}
\label{aa1}\sum_{i = 0}^{N-2}\left\{ \int_{y_i}^{y_{i+1}} R(u_k)v_kdy  - \frac{1}{h_k^2} \left(\frac{\partial u_k}{\partial y}(y_{i+1}) v_k(y_{i+1}) - \frac{\partial u_k}{\partial y}(y_{i}) v_k(y_{i})\right)\right\} = 0 
\end{align}
for all $v_k\in V_k$.
Adding \eqref{aa1} to \eqref{aa0} while taking $v= e \in H^1(0,1)$ and $v_k = I_k e \in V_k$ gives  
 
 \begin{align}
\nonumber  \frac{1}{2} \frac{d}{d \tau}  \lVert e \rVert^2 + \frac{1}{h^2} \left\Vert \frac{\partial e}{\partial y} \right\Vert^2 &=\frac{h^{\prime}}{h} \int_0^1 y \frac{\partial u}{\partial y} e dy +  \frac{1}{h} \text{Bi}\left( \frac{b(\tau)}{ m_0}  - \text{H} u(\tau, 0)\right)e(0)\\
\nonumber & - \frac{h^{\prime}}{h} u(\tau, 1) e(1)
 - \left(\frac{1}{h^2}- \frac{1}{h_k^2} \right)  \left( \frac{\partial u_k}{\partial y}, \frac{\partial e}{\partial y} \right) -  \frac{h^{\prime}_k}{h_k} \int_0^1 y \frac{\partial u_k}{\partial y} e dy \\
 \nonumber & -  \frac{1}{h_k}\text{Bi} \left( \frac{b(\tau)}{ m_0}  - \text{H} u_k(\tau, 0)\right)e(0)
 + \frac{h^{\prime}_k}{h_k} u_k(\tau, 1) e(1) \\
 \nonumber & + \sum_{i = 0}^{N-2}\Biggl\{ \int_{y_i}^{y_{i+1}} R(u_k)(e- I_ke)dy \\
 \nonumber& - \frac{1}{h_k^2} \Biggl( \frac{\partial u_k}{\partial y}(y_{i+1})(e- I_ke) (y_{i+1}) - \frac{\partial u_k}{\partial y}(y_{i}) (e- I_ke)(y_{i})\Biggr)\Biggr\}\\
 \nonumber &= \sum_{i=1}^5I_i,
 \end{align}

where
\begin{align}
\nonumber I_1 &:=\frac{h^{\prime}}{h} \int_0^1 y \frac{\partial u}{\partial y} e dy  - \frac{h^{\prime}_k}{h_k} \int_0^1 y \frac{\partial u_k}{\partial y} e dy,\\
\nonumber I_2&:=  \frac{1}{h} \text{Bi}\left( \frac{b(\tau)}{ m_0}  - \text{H} u(\tau, 0)\right)e(0) -  \frac{1}{h_k}\text{Bi} \left( \frac{b(\tau)}{ m_0}  - \text{H} u_k(\tau, 0)\right)e(0),\\
\nonumber I_3& :=  \frac{h^{\prime}_k}{h_k} u_k(\tau, 1) e(1)  - \frac{h^{\prime}}{h} u(\tau, 1) e(1),\\
\nonumber I_4& :=  - \left(\frac{1}{h^2}- \frac{1}{h_k^2} \right)  \left( \frac{\partial u_k}{\partial y}, \frac{\partial e}{\partial y} \right), \\
\nonumber I_5&:=  \sum_{i = 0}^{N-2}\left\{ \int_{y_i}^{y_{i+1}} R(u_k)(e- I_ke)dy  - \frac{1}{h_k^2} \left(\frac{\partial u_k}{\partial y}(y_{i+1})(e- I_ke) (y_{i+1}) - \frac{\partial u_k}{\partial y}(y_{i}) (e- I_ke)(y_{i})\right)\right\}.
\end{align}
By using \eqref{energy} together with Cauchy-Schwarz and Young's inequality, we obtain 
\begin{align}
\nonumber| I_1|& \leq \frac{h^{\prime}}{h} \left\Vert \frac{\partial e}{\partial y}\right\Vert \left\Vert  e\right\Vert + \left\vert \frac{h^{\prime}}{h}  - \frac{h^{\prime}_k}{h_k}  \right\vert \| e\| \\
\label{i1}& \leq\left(\frac{\xi}{h^2} \left\Vert \frac{\partial e}{\partial y}\right\Vert^2 + c_{\xi}  \|h^{\prime}\|_{\infty}^2 \left\Vert  e\right\Vert ^2  \right) + \xi \| e\|^2 +  2c_{\xi} \left( |h - h_k|^2 + |h^{\prime} - h_k^{\prime}|^2 \right). \\
\nonumber| I_2|& \leq \text{Bi} \;\frac{b(\tau)}{m_0} \frac{1}{h h_k}| h- h_k| |e(0)| + \text{Bi}\; \text{H} \left\vert \frac{u(\tau, 0)}{h} - \frac{u_k(\tau, 0)}{h_k}\right\vert  |e(0)|\\
\nonumber&  \leq \left( \text{Bi} \frac{b^*}{m_0} \frac{1}{L^2} \hat{c} + \text{Bi} \;\text{H} \hat{c} \right)| h- h_k|  \| e\|^{1- \theta}\left\Vert \frac{\partial e}{\partial y}\right\Vert^{\theta} + c_2 \text{Bi}\; \text{H} \hat{c} \| e\|^{2(1- \theta)}\left\Vert \frac{\partial e}{\partial y}\right\Vert^{2\theta}  \\
\label{i2}&\leq \bar{\xi}| h-h_k|^2 + \xi c_{\bar{\xi}}\frac{1}{h^2}  \left\Vert \frac{\partial e}{\partial y}\right\Vert^2 + \tilde{c}^4 c_{\xi} c_{\bar{\xi}} h^2  \| e\|^{2} + \frac{\xi}{h^2} \left\Vert \frac{\partial e}{\partial y}\right\Vert^2  + \tilde{c}_1^2 c_{\xi} h^2 \| e\|^2, 
\end{align}
where
$$ \theta = \frac{1}{2},\;\; \tilde{c} := \left( \text{Bi} \frac{b^*}{m_0} \frac{1}{L^2} \hat{c} + \text{Bi}\; \text{H} \hat{c} \right)\;\;\;\;\text{and}\;\;\;\tilde{c}_1:= c_2\text{Bi}\; \text{H}\hat{c}.$$
\begin{align}
\nonumber| I_3| &\leq \left\vert \frac{h^{\prime}}{h}  - \frac{h^{\prime}_k}{h_k}  \right\vert |e(1)| + \frac{h_k^{\prime}}{h_k} |e(1)|^2\\
\label{i3}&\leq  2\bar{\xi} \left( |h - h_k|^2 + |h^{\prime} - h_k^{\prime}|^2 \right) +  \xi \frac{1}{h^2}  \left\Vert \frac{\partial e}{\partial y} \right\Vert^2 + c_{\bar{\xi}} c_{\xi} c_3^4 \hat{c}^4 \| e\|^{2}h^2 + c \left(\frac{\xi} {h^2}  \left\Vert \frac{\partial e}{\partial y} \right\Vert^2 + c_{\xi} \| e\|^2 \right).\\
\nonumber| I_4| & \leq |h - h_k| \frac{h+ h_k}{h^2 h_k^2} \left\Vert \frac{\partial u_k}{\partial y} \right\Vert\left\Vert \frac{\partial e}{\partial y}\right\Vert\\
\label{i4}&\leq \xi | h-h_k|^2 + c_{\xi} c^2(h_0, L)\frac{1}{h^2} \left\Vert \frac{\partial e}{\partial y}\right\Vert^2.
\end{align}
To bound $|I_5|$ from above, we use the fact that $ I_k e$ is the Lagrange interpolant of $e$ with the property $(e- I_k e)(y_i) = 0,\; i \in \{0,1,2, \cdots, N-1\}.$ We have 
\begin{align}
\nonumber | I_5| &\leq  \sum_{i = 0}^{N-2}\int_{y_i}^{y_{i+1}} R(u_k)(e- I_ke)dy\\
\nonumber&\leq \sum_{i = 0}^{N-2} \| R(u_k)\|_{L^2(I_i)} \| e- I_ke \|_{L^2(I_i)}\\
\nonumber& \leq \tilde{c}  \sum_{i = 0}^{N-2} \| R(u_k)\|_{L^2(I_i)} k_i \left\Vert\frac{\partial e}{\partial y} \right\Vert_{L^2(I_i)}\\
\nonumber& \leq \tilde{c} \left(  \sum_{i = 0}^{N-2} \| R(u_k)\|_{L^2(I_i)}^2 k_i^2\right)^{\frac{1}{2}} \left(  \sum_{i = 0}^{N-2} \left\Vert\frac{\partial e}{\partial y} \right\Vert_{L^2(I_i)}^2\right)^{\frac{1}{2}}\\
\nonumber& =  \tilde{c} \left(  \sum_{i = 0}^{N-2} \| R(u_k)\|_{L^2(I_i)}^2 k_i^2\right)^{\frac{1}{2}} \left\Vert\frac{\partial e}{\partial y} \right\Vert_{L^2(0, 1)}.
\end{align}
By using Young's inequality, we obtain
\begin{align}
\label{i5}|I_5| \leq \frac{\xi}{h^2} \left\Vert\frac{\partial e}{\partial y}\right\Vert^2 + c_{\xi}\tilde{c}^2 h^2 \sum_{i = 0}^{N-2} \| R(u_k)\|_{L^2(I_i)}^2 k_i^2.
\end{align}
 It follows from \eqref{i1}--\eqref{i5} that for all  $\xi,\; \bar{\xi}>0$,  there exist positive constants $K_1,\; K_2,\; K_3$ and $K_4$  such that
\begin{align}
\nonumber \frac{1}{2} \frac{d}{d \tau}  \lVert e \rVert^2 + \frac{1}{h^2} \left\Vert \frac{\partial e}{\partial y} \right\Vert^2 &\leq K_1\| e\|^2 + K_2 | h-h_k|^2 \\
 \nonumber &+  \frac{1}{h^2} K_3 \left\Vert \frac{\partial e}{\partial y} \right\Vert^2 +  K_4 \sum_{i = 0}^{N-2} \| R(u_k)\|_{L^2(I_i)}^2 k_i^2.
 \end{align}
 Let $\delta>0$ be a fixed, sufficiently small. Adding $\frac{\delta}{2} \frac{d}{d\tau} |h - h_k|^2$ on both sides and using \eqref{a} yields
 \begin{align}
 \nonumber \frac{1}{2} \frac{d}{d \tau} \left( \lVert e \rVert^2  + \delta | h-h_k|^2\right)+ \frac{1}{L^2}(1 - K_3 - 3\delta A_0^2\eta) \left\Vert \frac{\partial e}{\partial y} \right\Vert^2 &\leq K_1 \|e\|^2 + K_2 | h-h_k|^2 + 3\delta A_0^2c_{\eta}\| e\|^2\\
\nonumber  &+ C\delta\| h - h_k\|^2+  K_4\sum_{i = 0}^{N-2} \| R(u_k)\|_{L^2(I_i)}^2 k_i^2.
 \end{align}
 We choose $\xi>0,\; \bar{\xi}>0$ and  $\eta>0$ in such a way that $1 - K_3 - 3\delta A_0^2\eta \geq 0$. Then it exists $K_5>0$ such that\\
  \begin{align}
 \nonumber \frac{1}{2} \frac{d}{d \tau} \left( \lVert e \rVert^2  + \delta | h-h_k|^2\right)+ \frac{1}{L^2}(1 - K_3 - 3\delta A_0^2\eta) \left\Vert \frac{\partial e}{\partial y} \right\Vert^2 &\leq K_5( \|e\|^2 + \delta | h-h_k|^2 )+ \\
\label{gronwall} &+ K_4 \sum_{i = 0}^{N-2} \| R(u_k)\|_{L^2(I_i)}^2 k_i^2.
 \end{align}
  Applying Gronwall's inequality to \eqref{gronwall} for the quantity  $\lVert e \rVert^2  + \delta | h-h_k|^2$  and  using the initial condition  $$\| e(0)\|_{L^2(0, 1)}^2 = \sum_{i=0}^{N-2}\| e(0)\|_{L^2(I_i)}^2\leq k_i^4 \| u_0\|_{H^2(I_i)}^2, $$
 it exists a constant $c(\tilde{T}, L)$ such that
 \begin{align}
 \label{aa11}\lVert e \rVert^2  + \delta | h(\tau)-h_k(\tau)|^2  \leq c(\tilde{T}, L) \left( |h(0) - h_k(0)|^2+ k_i^4 \| u_0\|_{H^2(I_i)}^2  +  \sum_{i = 0}^{N-2} \int_{0}^{\tau} \| R(u_k)\|_{L^2(I_i)}^2 k_i^2 ds\right). 
 \end{align}
 By integrating \eqref{gronwall} on $(0, \tau)$ and by using \eqref{aa11}, it exists another constant $c(\tilde{T}, L)>0$ such that the following inequality holds: 
 \begin{align*}
 \int_0^{\tau} \left\Vert\frac{\partial }{\partial x}(u- u_k) \right\Vert^2 ds \leq c(\tilde{T}, L) \left( |h(0) - h_k(0)|^2+ k_i^4 \| u_0\|_{H^2(I_i)}^2  +  \sum_{i = 0}^{N-2} \int_{0}^{\tau} \| R(u_k)\|_{L^2(I_i)}^2 k_i^2 ds\right). 
 \end{align*}
 This concludes the proof of Theorem \ref{post}.
 \end{proof}
  
  \section{Numerical illustrations} \label{numerical}
  
  In this section, we firstly present our simulation results for both the dense and foam rubber. The difference in the two cases is incorporated in the choice of parameters. To approximate numerically the weak solution to  \eqref{a19}--\eqref{a22},  we use the method of lines; for more details see, for instance, \cite{larsson2008partial}. Firstly, the model equations are discretized in space by means of the finite element method. The resulting time-dependent system of ordinary differential equations  is tackled  via the solver \texttt{odeint} in Python; see \cite{linge2020programming} for details on Python and  \cite{johansson2018numerical} for details on the solver. We refer the reader to see our previous work \cite{nepal2021moving} for the laboratory  experiments, numerical method and simulation results where we investigated the parameter space by exploring eventual effects of the choice of parameters on the overall diffusants penetration process.\\
 We take as observation time $T_f = 40$ minutes for the final time with time step $\Delta t = 1/1000$ minutes. We choose the number of space discretization points $N$ to be $100$. The values of parameters are taken to be $s_0 = 0.01$ (mm), $m_0$ = 0.1 (gram/mm$^3$) and  $b=1$ (gram/mm$^3$). We take the value $3.66 \times 10^{-4}$ (mm$^2$/min) for the diffusion constant $D$ \cite{morton2013rubber}, $0.564$ (mm/min) for absorption rate $\beta$ \cite{rezk2018determination} and $2.5$ for Henry's constant \text{H} \cite{bohm1998moving}. For the dense rubber, we choose $\sigma(s(t)) = s(t)/10$ (gram/mm$^3$) and $a_0 = 500$ (mm$^4$/sec/gram) while we choose $\sigma(s(t)) = s(t)/50$ (gram/mm$^3$) and $a_0 = 2000$ (mm$^4$/sec/gram) for the foam rubber case.\\
  \begin{figure}[ht]
 	\centering
 	\includegraphics[width=0.43\textwidth]{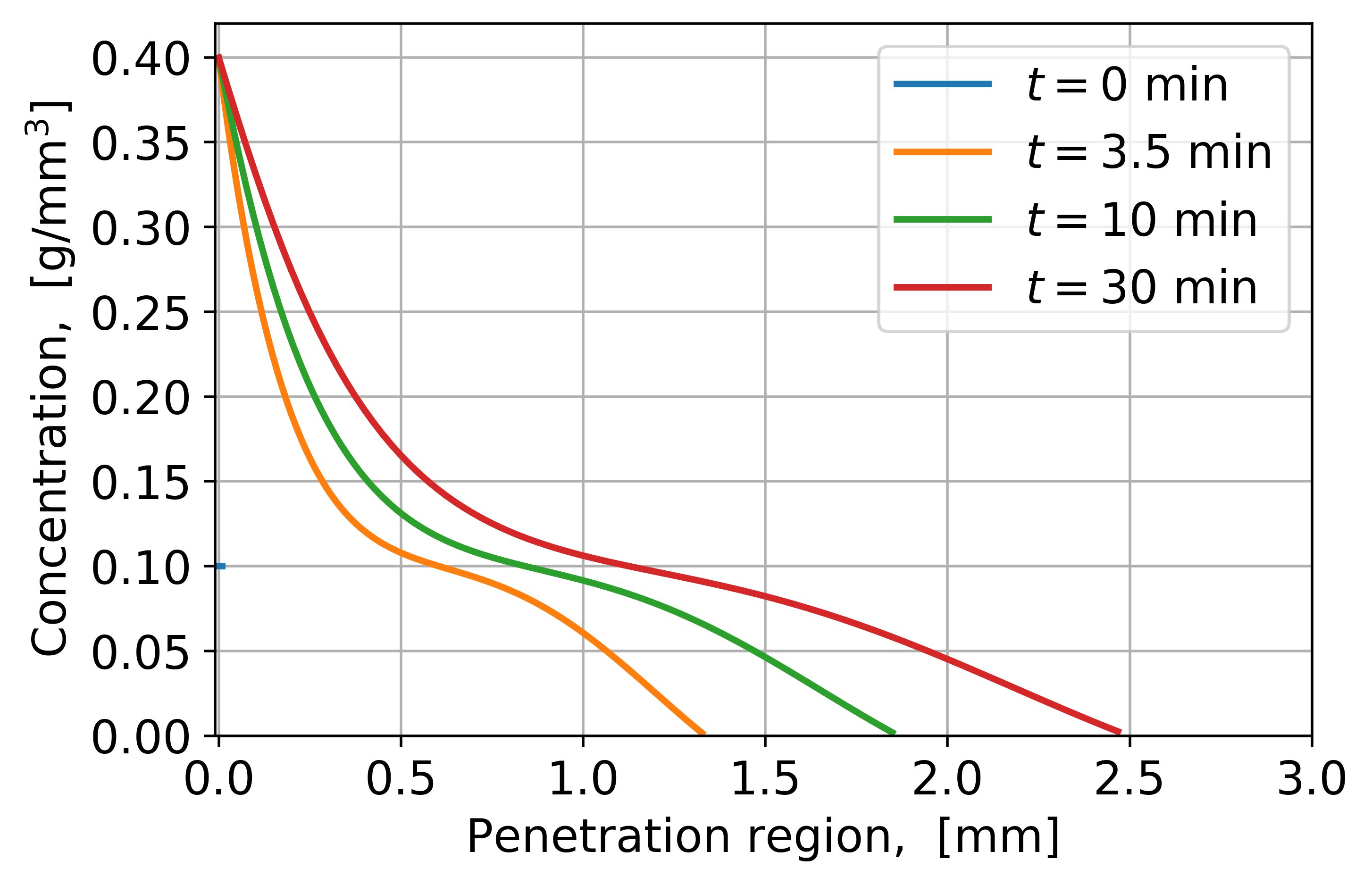}
 	\hspace{0.1cm}
 	\includegraphics[width=0.40\textwidth]{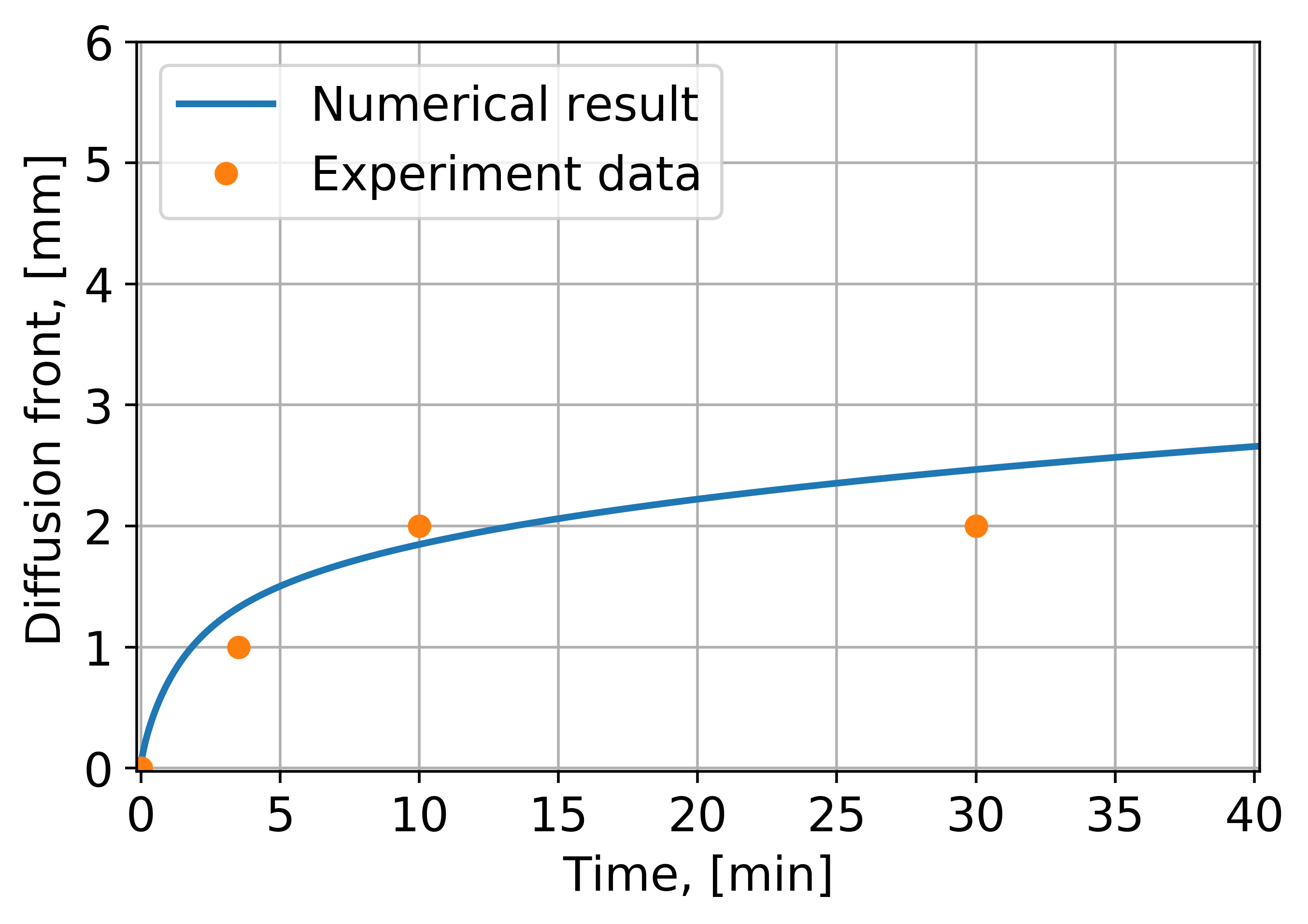}
 	\caption{Dense rubber case. Left: Concentration profile of diffusant. Right: Position of the moving boundary.}
 	\label{Fig:24}
 \end{figure}
 
  \begin{figure}[ht]
 	\centering
 	\includegraphics[width=0.43\textwidth]{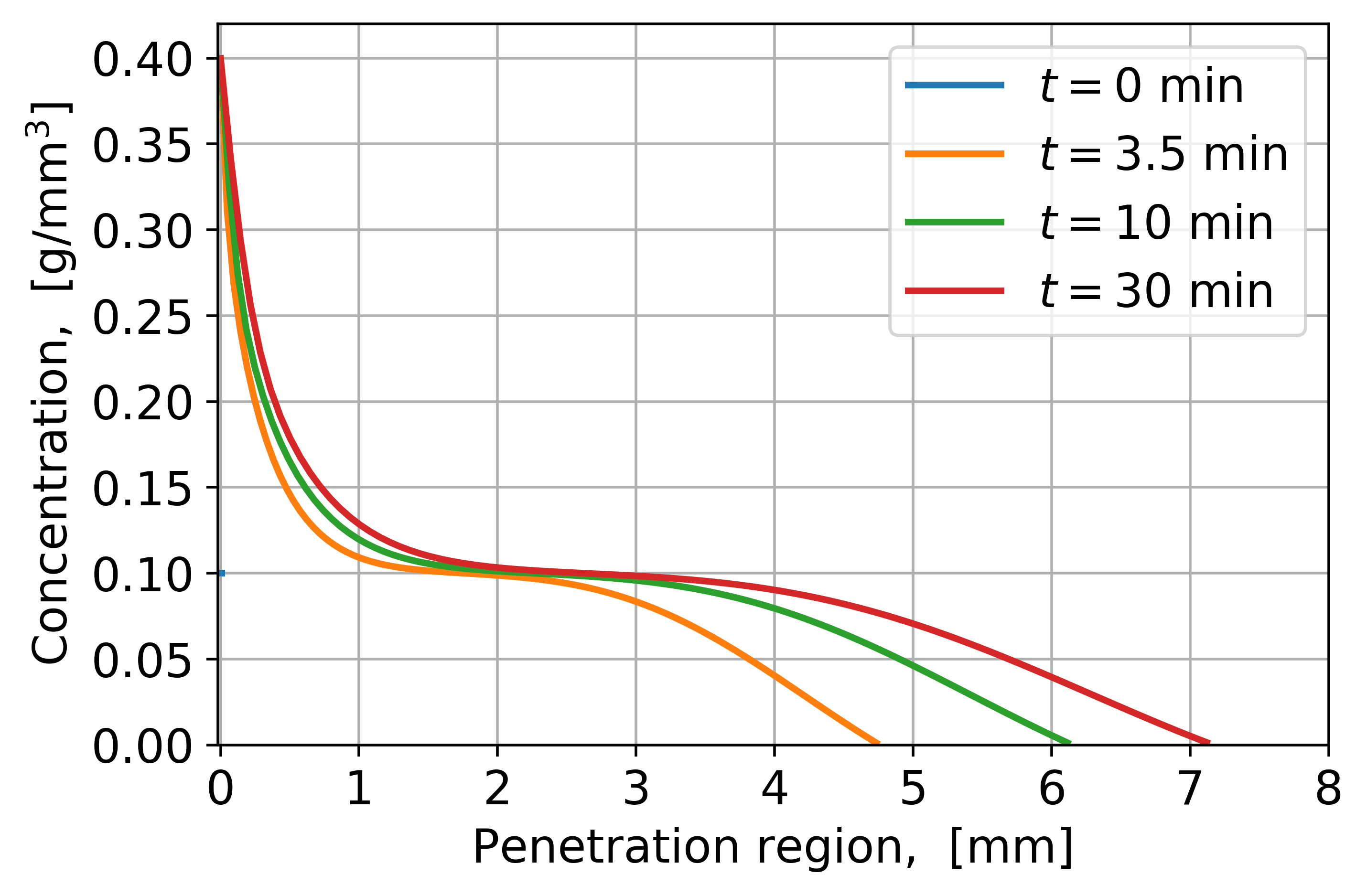}
 	\hspace{0.1cm}
 	\includegraphics[width=0.40\textwidth]{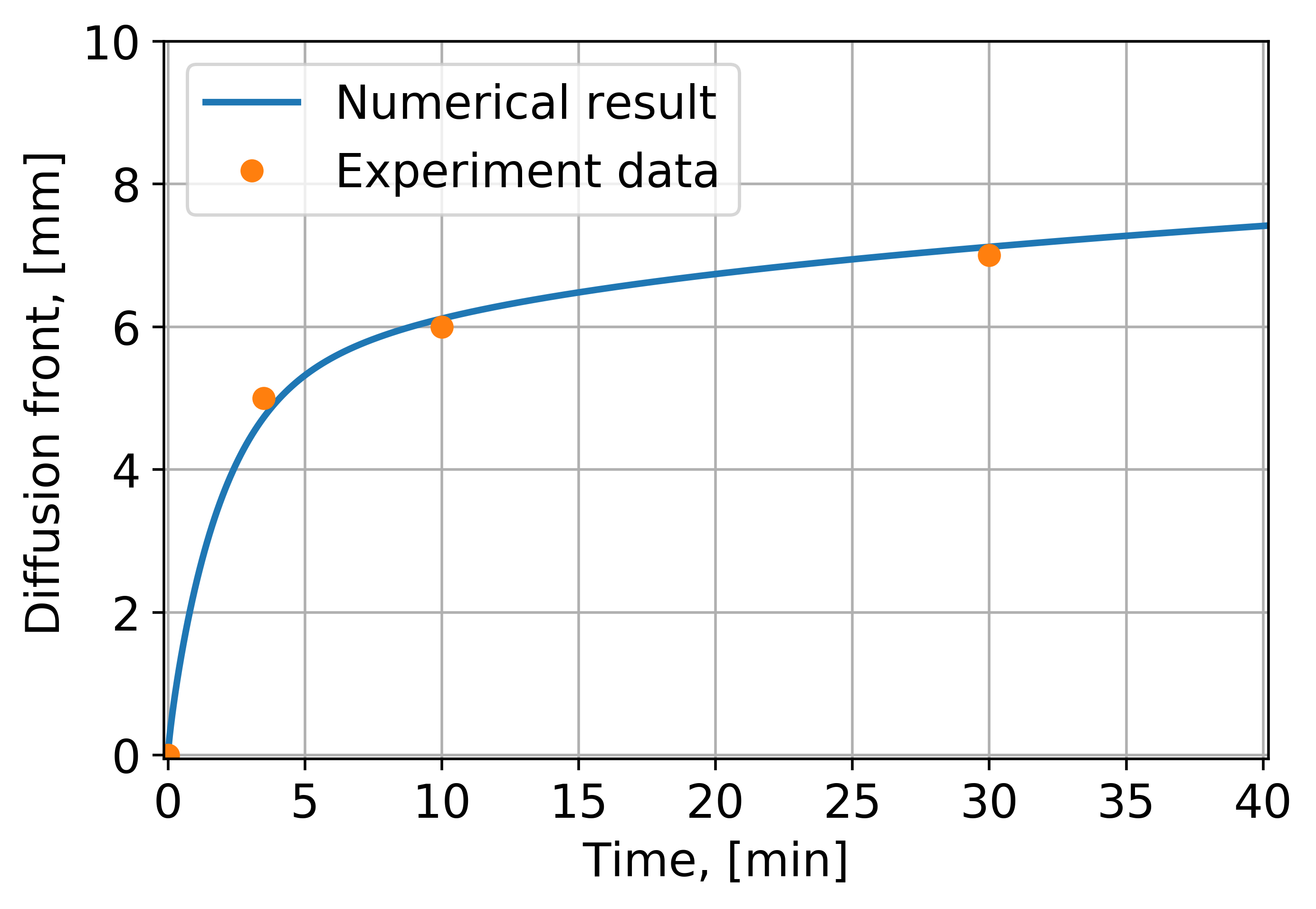}
 	\caption{Foam rubber case. Left: Concentration profile of diffusants. Right:  Position of the moving boundary.}
 	\label{Fig:25}
 \end{figure}
In Figure \ref{Fig:24} and Figure \ref{Fig:25} we show the concentration profile of the penetrating diffusant, and respectively, the position of the moving boundary for the dense rubber and foam rubber respectively.  
Comparing the diffusant concentration profile in Figure \ref{Fig:24} and Figure \ref{Fig:25},  we notice in both cases that, within a short time of release of diffusant from its initial
position, the diffusant quickly enters the rubber from the left boundary and then starts diffusing inside  displacing a penetration front. In bothe Figure \ref{Fig:24} and Figure \ref{Fig:25}, we compare simulation results against experimental data for the position of moving boundary. Both plots  show a good agreement between model and experiment. 

  Finally, we wish to point out that  the order of convergence of our FEM scheme is consistent with the estimates stated in  \eqref{aprioriestimate}.
As we are not aware of an exact solution to \eqref{a19}--\eqref{a22},  {\clb we compute the finite element approximation of our weak solution on a fine mesh  (say, with $640$ nodes) and denote it by $u_{\tilde{k}}$. We use this  $u_{\tilde{k}}$ as the reference solution for computing the errors and convergence orders. 
We make use of the discrete $\ell^2(Q(\hat{T}))$ norm which we denote here as
 \begin{align*}
   \label{discretenorm} e(k_i):= \left\Vert u_{\tilde{k}}(\tau, y) - u_{k_i}(\tau, y) \right\Vert_{L^2(S_{\hat{T}}, L^2(0, 1))} = \left(\Delta \tau k_i \sum_{j=0}^{N_t}  \sum_{\ell=0}^{N-1} |u_{\tilde{k}}(\tau_j, y_{\ell}) - u_{k_i}(\tau_j, y_{\ell}) |^2 \right)^{\frac{1}{2}}. 
 \end{align*}
Here $\Delta \tau$ is the uniform size of the $N_t+1$ time steps, while $\{k_1, k_2, k_3,\cdots\}$ with $k_i >k_{i+1}$ for $i\in\{1,2,\cdots\}$ is a finite collection of the different mesh sizes used in the computations.}

We  determine the convergence order based on any two consecutive calculations of discrete errors using two different mesh sizes.
To this end, we perform the computations on a sequence of grids with mesh size $k$ that are halved in each step. Thus, we use the following formula  to compute the convergence order $r$:

\begin{align*}
    r := \log_2\left(\frac{e(k_i)}{e(k_{i+1})}\right).
\end{align*}

 \begin{figure}[ht] 
 	\centering
 	\includegraphics[width=0.43\textwidth]{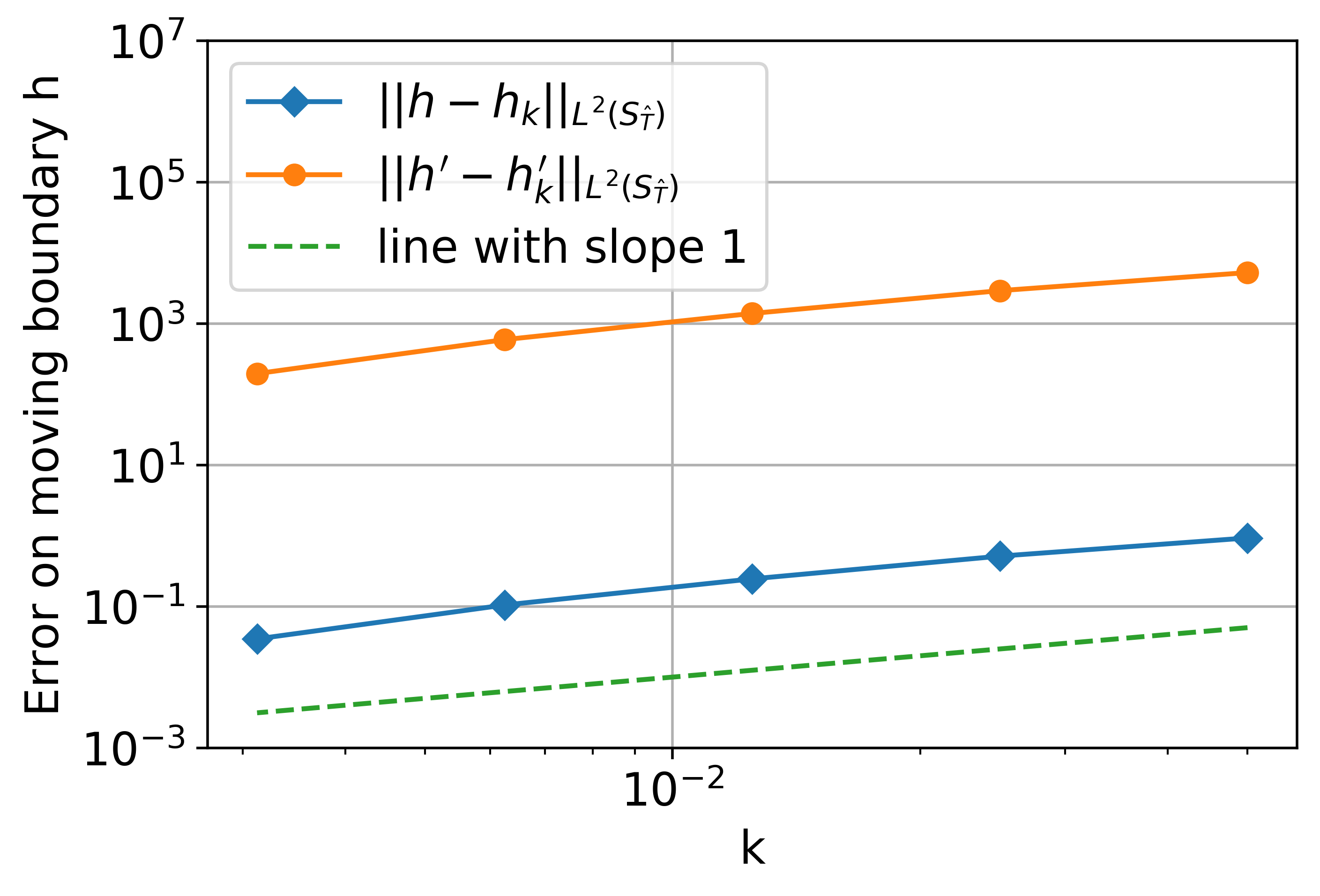}
 	\hspace{0.1cm}
 	\includegraphics[width=0.43\textwidth]{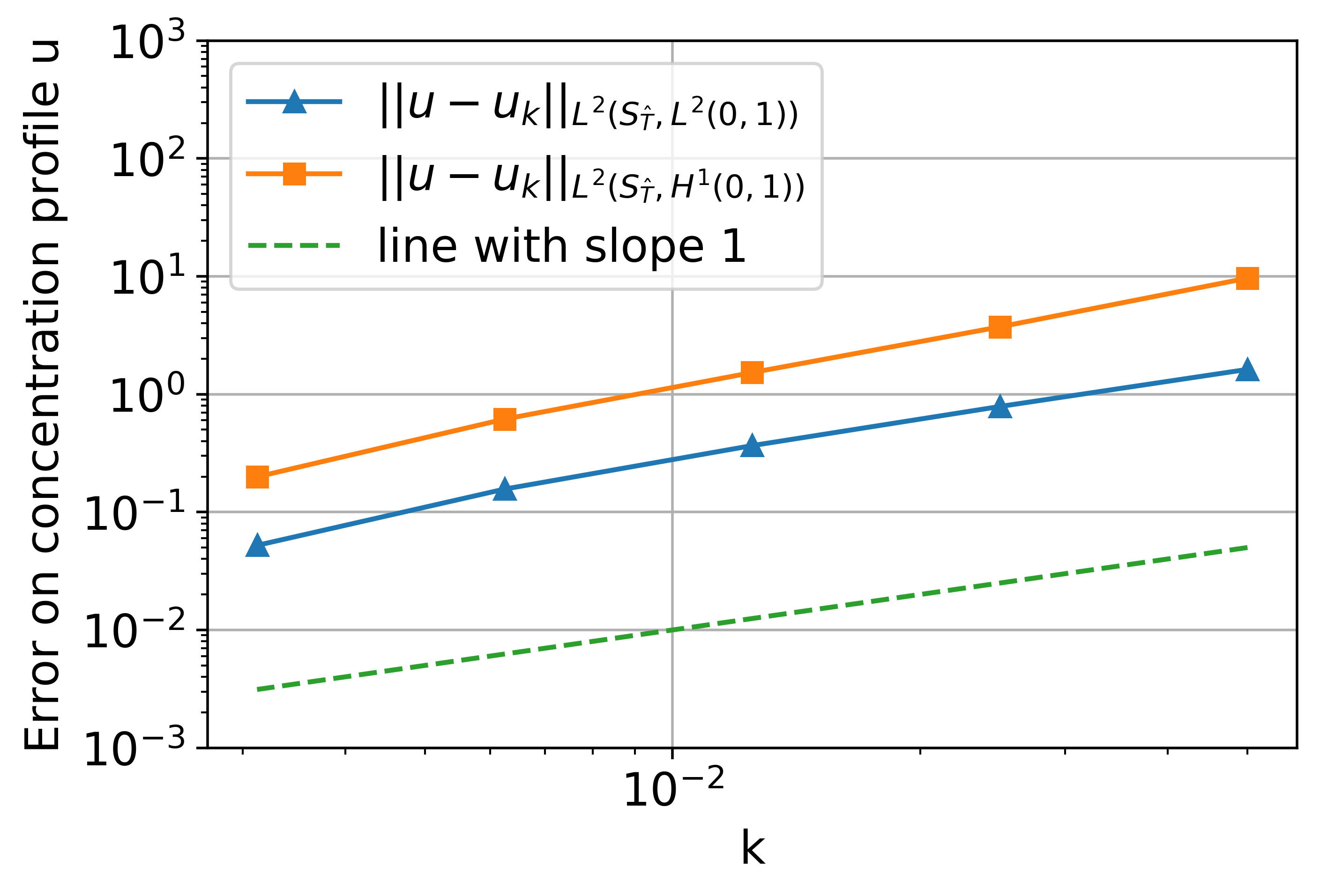}
 	\caption{ Convergence order when time step size $\Delta t = 10^{-4}$  is fixed. Dash lines are lines of slope 1.  Left: Log log scale plot of error on the boundary $\|h - h_k\|_{L^2(S_{\hat{T}})}$ (circles) and $\|h^{\prime} - h_k^{\prime}\|_{L^2(S_{\hat{T}})}$ (diamonds).  Right: Log log scale plot of error on the concentration $\|u - u_k\|_{L^2(S_{\hat{T}}, L^2(0, 1))}$ (triangles) and $\|u - u_k\|_{L^2(S_{\hat{T}}, H^1(0, 1))}$ (squares).}
 	\label{PlotlogLog}
 \end{figure}
We show in Figure \ref{PlotlogLog}  the computed convergence order for the approximation  of the moving boundary position and of the concentration profile. This is done in various norms for $N= 20, 40, 80, 160$, and  $320$. These numerical results are in agreement with the convergence order proven in Section \ref{mainresults}.

\section{Conclusion}\label{conclusion}

The goal of this work was to analyze  the errors produced by a semi-discrete finite element approximation of the weak solution of  moving boundary problem modeling the penetration of diffusants into rubber. We obtained  the  \textit{a priori}  error estimate  \eqref{aprioriestimate} for the diffusant concentration profile as well as for the position and speed of the moving boundary. The convergence rate is of order of $\mathcal{O}(1)$ -- the deviation from optimality is due to the nonlinear coupling produced by the presence of the unknown moving boundary. 
Additionally, we obtained the \textit{a posteriori} error \eqref{aposteriori}. Finally, we illustrated numerically the basic output of our model. It turns out that results are in the expected experimental range and they can be obtained in practice using convergence rates closed to the theoretical ones. 

\section*{Acknowledgements} The authors acknowledge fruitful discussions with U. Giese, N. Kr\"{o}ger, R. Meyer (Deutsches Institut f\"{u}r Kautschuktechnologie, Hannover, Germany), T. Aiki (Japan Women's University, Tokyo, Japan), and K. Kumazaki (Nagasaki University, Japan) about the modeling, mathematical analysis, and simulation of rubber-based materials exposed to environmental conditions. The work of S.N.  and A.M. is financed partly by the Swedish Research Council's project "{\em  Homogenization and dimension reduction of thin heterogeneous layers}", grant nr. VR 2018-03648. A.M. also thanks the Knowledge Foundation for the grant KK 2019-0213, which led to the formulation of this problem setting. 
\begin{center}
	\bibliographystyle{plain}
	\bibliography{NumAnalysis_Polymer}
\end{center} 

\end{document}